\newtheorem{thm}{Theorem}[subsection]
\newtheorem{lem}[thm]{Lemma}
\newtheorem{prop}[thm]{Proposition}
\theoremstyle{definition}
\newtheorem{defn}[thm]{Definition}
\theoremstyle{remark}
\newtheorem{rem}[thm]{Remark}
\numberwithin{equation}{subsection}
\newcommand{\eps}{\varepsilon}
\newcommand{\h}{\mathcal{H}}
\newcommand{\C}{\mathcal{C}}
\newcommand{\F}{\mathcal{F}}
\newcommand{\G}{\mathcal{G}}
\newcommand{\El}{\mathcal{L}}
\newcommand{\W}{\mathcal{W}}
\newcommand{\U}{\mathcal{U}}
\newcommand{\PH}{{\textsf{PH}}}
\newcommand{\Real}{\mathbb{R}}
\newcommand{\Nat}{\mathbb{N}}
\newcommand{\Int}{\mathbb{Z}}
\newcommand{\Toro}{\mathbb{T}}
\newcommand{\Vol}{\upsilon o\ell}
\newcommand{\ve}{\upsilon}
\newcommand{\abs}[1]{\left\vert#1\right\vert}
\newcommand{\set}[1]{\left\{#1\right\}}
\newcommand{\seq}[1]{\left<#1\right>}
\newcommand{\norm}[1]{\left\Vert#1\right\Vert}
\newcommand{\Diff}{\operatorname{Diff}}
\newtheorem*{thmx}{Theorem}
\newtheorem*{thma}{{\bf Theorem A}}
\newtheorem*{thmb}{{\bf Theorem B}}
\newtheorem*{thmc}{{\bf Theorem C}}
\begin{document}

\title[Hyperbolic Sets and Homological Entropy]
 {Hyperbolic Sets and Entropy\\ at the Homological Level}

\author{Mario Roldan}

\address{IMPA, Instituto Nacional de Matem\'{a}tica Pura e Aplicada, Estrada Dona Castorina 110, Jardim Botanico, Rio de Janeiro 22460--320, Brasil}

\email{roldan@impa.br}

\thanks{{\em Date}. July, 2014}
\thanks{2010 {\em Mathematics Subject Classification}. Primary 37D25; Secondary 37B40, 37D30}
\thanks{{\em Key words and phrases}. topological entropy, homological entropy, partial hyperbolicity}
\thanks{The author would like to express deep gratitude to {\em Enrique Pujals} and {\em Rafael Potrie} whose guidance and support were crucial for the successful completion of this project. The author also would like to thank to Mart\'{\i}n Sambarino and Pablo Carrasco for useful conversations. The author also would like to thank IMPA-Brazil, IMERL, CMAT-Montevideo  for their warm hospitality, institutions where the results of this paper were obtained during the Ph.D. studies. This work was completed with the support of CNPq-Brazil.}





\begin{abstract}
  The aim of this work is to study a kind of refinement of the entropy conjecture, in the context of partially hyperbolic diffeomorphism with one dimensional central direction, of $d$-dimensional torus. We start by establishing a connection between the unstable index of hyperbolic sets and the index at algebraic level. Two examples are given which might shed light on which are the good questions in the higher dimensional center case.
\end{abstract}

\maketitle

\section*{Introduction}

Let us consider $f\colon M\to M$ a diffeomorphism of a compact manifold. Concerning smooth maps and their action at an algebraic level, the well known property, which may hold for all dynamical systems, is the so called {\em entropy conjecture}~\cite{Shub74}. The precise statement goes as follows:

\medskip
{\bf Entropy Conjecture}: For any $C^{1}$-map $f\colon M\to M$ of a compact manifold $M$, $h_{top}(f)$ is bounded from below by $h_H(f)$. \medskip

Here $h_{top}(f)$ denotes the {\em topological entropy} of a continuous map $f$ and $h_H(f)$ denotes the {\em homological entropy} of $f$, terms which are explained below.

The major progress was due to Yomdin~\cite{Yom87}, where the entropy conjecture was proved for any $C^{\infty}$ diffeomorphisms.

\medskip

At this point we consider the following question which is analyzed in this work:

\medskip

{\bf Basic Question:} For a $C^{1}$ diffeomorphism $f\colon M\to M$, consider $u_{0}\in \Nat$ such that $h_H(f)=\log sp(f_{\ast, u_0})$ and assume that $f_{\ast, 1}$ is hyperbolic. When, or under which condition
does there exist uniformly hyperbolic sets $\Lambda_\ell\subset M$ with hyperbolic splitting of the tangent bundle, $T_{\Lambda_{\ell}}M = E^{u}_{\ell}\oplus E^{s}_{\ell}$, such that $\dim E^{u}_{\ell}=u_{0}$ and
$$\limsup_{\ell\to\infty} \ h_{top}(f\mid \Lambda_{\ell})\geq h_H(f)?$$

The number $u_{0}\in \Nat$ such that $h_H(f)=\log sp(f_{\ast, u_0})$ is called {\em algebraic index} or {\em homological index}. If $\Lambda$ is a hyperbolic set, the dimension of its unstable bundle is called {\em unstable index} of $\Lambda$.

\medskip

Recall that, the topological entropy has to do with the exponential growth rate of $u$-dimensional unstable disks, while the entropy in homology refers with how the dynamics disfigures the manifold at an algebraic level, and the quantity $h_H(f)$ measures how much and how many times the dynamic mixes up the $u$-dimensional cells. One can thus expect that hyperbolic sets with unstable index equal to the homological index will exist and hopefully contribute at least as much entropy as $h_H(f)$.

We try with this refinement to understand the structure of invariant sets that contributes to the growth of topological entropy. The prevailing idea for many systems is that hyperbolic sets are the heart of dynamic complexity.

Notice that, the answer to the basic question is in general negative if the action induced on the first homology group is not hyperbolic. For example it is possible to have a diffeomorphism with positive entropy, which simultaneously lacks hyperbolic sets. This can be done even in the partially hyperbolic cases, look for example at, $Anosov\times Id$ in $\Toro^{3}$.

The lower bound proposed by the entropy conjecture is clearly not an upper bound since it is easy to construct diffeomorphisms in every isotopy class with an arbitrarily large entropy. On the other hand, some families of maps may verify that the entropy in homology is also an upper bound: to determine if a family of maps satisfies such a bound is another interesting question which will be addressed in this work.

This paper considers continuous maps defined on the $d$-torus, where is well known that the entropy conjecture holds~\cite{MisPrzy77}, and culminates in an affirmative answer in the case of partially hyperbolic system with one-dimensional center bundle, isotopic to an linear Anosov, when the isotopy is assumed to be in the set of partially hyperbolic diffeomorphisms. Some examples are presented where the upper bound is not attained.

When dealing with the existence of hyperbolic sets it is a common practice to study the existence of hyperbolic measures. For the case of partially hyperbolic system, it means to study the sign of the central Lyapunov exponents~\cite{Ures12}. We follow this approach and establish a connection between the unstable index and algebraic index.

Examples in~\cite{HHTU10} shows that the answer to the basic question is also possible when the algebraic index is not unique. Furthermore, there seems to be evidence, of what should be an expected answer, in a dense and open set.

\section{Presentation of Results}

This section contains a brief summary of the results obtained. The terminology involved in the theorems are explained in the next section.

\subsection{Precise Setting}

 Before stating the precise results it is essential to mention a result by Fisher, Potrie, and Sambarino~\cite{FPS13}. In the study of dynamical coherence for partially hyperbolic diffeomorphism of torus, they have established the existence of a unique maximal entropy measure in the case where a partially hyperbolic diffeomorphism can be connected to a hyperbolic linear automorphism via a path which remains within the set of partially hyperbolic diffeomorphisms. For the existence and uniqueness of the measure, the central bundle is required to be one-dimensional.

\medskip

We start by briefly introducing the definition of partial hyperbolicity.

\begin{defn}
To say that $f\colon M\to M$ is a {\em partially hyperbolic} (pointwise) diffeomorphism we need three conditions:
\begin{enumerate}
  \item {\em Splitting condition}: There exists a continuous splitting of the tangent bundle $TM = E^{u}\oplus E^{c} \oplus E^{s}$, which is $Df$-invariant i.e., $DfE^{\sigma}_{x}=E^{\sigma}_{fx}$ \, for $\sigma=u, c, s.$
  \item {\em Domination condition}: There exists  $N>0$ such that for every $x\in M$ and unit vectors $v^{\sigma}\in E^{\sigma}$ ($\sigma = u,c,s$) we have \[ \norm{D_{x}f^{N}v^{s}} < \norm{D_{x}f^{N}v^{c}} < \norm{D_{x}f^{N}v^{u}}.\]
  \item {\em Contraction property}: $\norm{Df^{N}|_{E^{s}}}<1$ and $\norm{Df^{-N}|_{E^{u}}}<1$.
\end{enumerate}
  If the nature of the domination above holds for unit vectors belonging to the bundles of different points, then we say that $f$ is {\em absolutely} partially hyperbolic, i.e., \[\norm{D_xf^{N}v^{s}} < \norm{D_{y}f^{N}v^{c}} < \norm{D_{z}f^{N}v^{u}} \] for all $x,y,z\in M$ and $v^{\sigma}\in E^{\sigma}_{p}$ unit vectors ($\sigma = u, c, s$ and $p = x, y, z $), respectively.
\end{defn}

\begin{rem}
    For partially hyperbolic diffeomorphisms, it is a well-known fact that there are foliations $\W^{u}$ (unstable one) and $\W^{s}$ (stable one) tangent to the distributions $E^{u}$, $E^{s}$ respectively. Moreover, the bundles $E^{u}$ and $E^{s}$ are uniquely integrable~\cite{HPS77}. If there exist invariant foliations $\W^{c\sigma}$ tangent to $E^{c\sigma}=E^c\oplus E^\sigma$ for $\sigma=s,u$ then $f$ is said to be {\em dynamically coherent}.
\end{rem}

\begin{defn}
Let $\mu$ be a hyperbolic ergodic measure and a number $u \in\Nat$. It is
said that $\mu$ has {\em index} equal to $u$ if there exist $u$ positive
Lyapunov exponents. Here the exponents are counted with multiplicity.
\end{defn}


Let us start by establishing the set where the results will be held. First, the set of all partially hyperbolic diffeomorphisms of the $d$-dimensional torus are denoted by $\PH(\Toro^{d}).$
We consider $A\in SL(d,\Int)$, a $d\times d$ matrix with integer entries and determinant 1. Let $T\Toro^{d} = E^{u}_{A}\oplus E^{c}_{A}\oplus E^{s}_{A}$ be the dominated splitting associated to the induced diffeomorphism $f_A$ (which is also denoted by $A$) of the quotient $\Real^d/\Int^d=\Toro^d$.

Recall that for every continuous map $f$ on $\Toro^d$ there exists a unique linear map $A_f\colon \Real^{d}\to \Real^{d}$ such that the lift of $f$ to the universal covering map $\widetilde{f}$ is homotopic to $A_f$. We call $A_f$ the {\em linear part} of $f$.

We will now consider the subset of all partially hyperbolic diffeomorphisms, all of them having the same dimension of stable and unstable bundle, i.e.
\[\PH_{A,u,s} = \set{f\in \PH\,\,|\, f\sim A, \, \dim E^{u}_{f}= u, \,\,\dim E^{s}_{f}=s}.\]
Here $f\sim g$ denotes an isotopy between $f$ and $g$. To simplify notation, will be $\PH_{A}(\Toro^{d})$ written instead of $\PH_{A,u,s}(\Toro^{d})$, where the dimension of the bundles is implicitly understood. Now, we consider $\PH^{0}_{A}(\Toro^{d})$ to be the connected component of $\PH_{A}(\Toro^{d})$ containing the lineal map $A$.

\medskip

Then, in~\cite{FPS13} it is proved that:
\begin{thm}[Fisher, Potrie, Sambarino]
\label{t:FPS}
   For every $f\in$ {\em $\PH^{0}_{A}$}$(\Toro^{d})$ such that $\dim E^{c}_{f}=1$ there exists a unique maximal entropy measure which has entropy equal to the linear part.
\end{thm}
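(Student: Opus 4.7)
The plan is to produce a semiconjugacy $H\colon\Toro^{d}\to\Toro^{d}$ from $f$ to its linear part $A$, homotopic to the identity, and then exploit one-dimensionality of the center to transfer both the entropy value and the uniqueness of the maximal measure from $A$ to $f$. The ingredients would be assembled in four steps. First, I would establish dynamical coherence along the path in $\PH_A$ connecting $f$ to $A$: since $A$ is dynamically coherent and coherence is preserved under leaf-conjugacy under mild hypotheses, a continuation / plaque-expansivity argument (considerably simpler in the $\dim E^c=1$ regime, where the center bundle is automatically uniquely integrable for generic points along the isotopy) yields invariant foliations $\W^c$, $\W^{cu}$, $\W^{cs}$. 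Second, I would construct $H$ by lifting to $\Real^{d}$: the lift $\tilde f$ is a bounded $C^0$-distance from $A$, so $\tilde H := \lim_{n\to\infty} A^{-n}\circ\tilde f^{n}$ (suitably regularised) is well-defined on $\Real^{d}$, commutes with $\Int^{d}$-translations, and satisfies $\tilde H\circ\tilde f = A\circ\tilde H$, descending to the required $H$ on $\Toro^{d}$.

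Third, I would analyse the fibres of $H$. The bounded-displacement property rules out two points in a single $\W^{u}$- or $\W^{s}$-leaf of $f$ mapping to the same image (their $f^{\pm n}$-distances grow exponentially, contradicting $\|\tilde H-\mathrm{id}\|_\infty<\infty$). Combined with the product structure, each fibre $H^{-1}(x)$ is therefore contained in a single centre leaf, and since $\W^c$ is one-dimensional, $H^{-1}(x)$ is a compact arc (possibly a singleton). This is the decisive geometric input. Now Bowen's semiconjugacy inequality gives
\[
h_{top}(f)\ \leq\ h_{top}(A)\ +\ \sup_{x\in\Toro^{d}} h_{top}\!\bigl(f,H^{-1}(x)\bigr),
\]
and the supremum vanishes because $f$ acts as a homeomorphism of a one-dimensional compact interval, which has zero topological entropy. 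The reverse inequality $h_{top}(f)\geq h_{top}(A)$ is immediate from the surjectivity of $H$, giving $h_{top}(f)=h_{top}(A)$, which equals $\log sp(A_*)$ by the classical formula for linear torus automorphisms.

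For uniqueness of the maximal measure, let $\mu$ be any $f$-invariant measure with $h_\mu(f)=h_{top}(A)$. Then $H_{*}\mu$ is $A$-invariant with $h_{H_{*}\mu}(A)\geq h_{\mu}(f)-\sup_{x}h_{top}(f,H^{-1}(x))=h_{top}(A)$, so $H_{*}\mu$ is the Haar measure (the unique MME of the hyperbolic automorphism $A$). By the Ledrappier–Walters variational principle applied fibrewise, the equality in the Bowen bound forces the conditional measures of $\mu$ along the arcs $H^{-1}(x)$ to be measures of maximal entropy for the zero-entropy action of $f$ on a one-dimensional fibre; a compactness / Poincaré-recurrence argument on the arcs then forces these conditionals to be Dirac masses for Haar-a.e.\ $x$, so that $\mu$ is determined uniquely by $H_{*}\mu$.

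The main obstacle is the fibre analysis in step three: controlling that $H$ collapses only along the centre direction (and that each collapsed piece is a single arc, not a wilder compact subset of the leaf) is the technical heart of the statement and is exactly where the one-dimensional centre hypothesis cannot be avoided. Once this is in place, the entropy identity and the uniqueness argument follow from standard semiconjugacy and disintegration machinery.
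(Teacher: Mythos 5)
First, a point of reference: the paper does not prove this statement at all --- it is quoted as a black box from Fisher--Potrie--Sambarino \cite{FPS13} --- so there is no internal proof to compare against. Measured against the argument in that reference, your skeleton is the right one (semiconjugacy $H$ to the linear part, fibres equal to compact centre arcs, Bowen/Ledrappier--Walters with zero fibre entropy), but the proposal has a genuine gap at the decisive point.

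The gap is the uniqueness step, and it is not a technicality. Precisely because the fibre entropy vanishes, the Ledrappier--Walters formula shows that \emph{every} $f$-invariant measure $\mu$ with $H_{*}\mu=\mathrm{Haar}$ satisfies $h_{\mu}(f)=h_{top}(A)=h_{top}(f)$; equality in the Bowen bound therefore places no constraint whatsoever on the conditional measures along the arcs, so your appeal to ``measures of maximal entropy for the zero-entropy fibre action'' is vacuous. Concretely, after orienting the centre foliation (pass to $f^{2}$ if needed), the assignments $x\mapsto p^{\pm}(x)$ sending $x$ to the two endpoints of the arc $H^{-1}(x)$ are measurable equivariant sections, so $\mu^{\pm}:=(p^{\pm})_{*}\mathrm{Haar}$ are both invariant lifts of Haar and hence both maximal entropy measures; since distinct fibres are disjoint, $\mu^{+}\neq\mu^{-}$ as soon as the set $N=\set{x\,:\,H^{-1}(x)\ \text{is not a point}}$ has positive Haar measure. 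Uniqueness is therefore \emph{equivalent} to proving $\mathrm{Haar}(N)=0$, which neither compactness nor Poincar\'e recurrence delivers; this is the substantive part of the FPS proof and it is absent from your argument. Secondary gaps: in step three you infer unbounded ambient distance in $\Real^{d}$ from exponential growth of the \emph{leafwise} distance, which requires the quasi-isometry/global product structure of the lifted foliations (exactly the hard input of \cite{FPS13} for $d>3$, as the paper itself remarks when it says the $\Toro^{3}$ case follows from quasi-isometry); the formula $\widetilde{H}=\lim_{n}A^{-n}\circ\widetilde{f}^{\,n}$ does not converge, since the component along $E^{u}_{A}$ blows up --- one must solve $\widetilde{H}=\mathrm{id}+u$ with $u$ bounded by splitting along $E^{s}_{A}\oplus E^{u}_{A}$; and zero Bowen entropy of the non-invariant fibres needs uniformly bounded leafwise length of the arcs $H^{-1}(A^{n}x)$ together with monotonicity of the fibre maps, not merely the observation that interval homeomorphisms have zero entropy.
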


\subsection{Statement of Results}

 In the context presented above, the first refinement of the entropy conjecture is performed over $\PH^{0}_{A}(\Toro^{d})$, considering those who have one-dimensional central direction. However, the result is valid on any connected component containing some diffeomorphism which admits a special closed form.

\begin{thma} \em
    Let $f\in$ {\em $\PH^{0}_{A}$}$(\Toro^{d})$ with $\dim E^{c}_{f}=1$. Then the following holds:
    \begin{enumerate}
      \item Let $\mu$ be an ergodic $f$-invariant measure, $0<\eps_0<\abs{\lambda^{c}_A}$. Suppose that $h_{\mu}(f)>h(A)-\eps_0$ then $\mu$ is a hyperbolic measure with index $u=\dim E^{u}_A$.
      \item For every $\eps>0$ there exists a hyperbolic set $\Lambda_{\eps}\subset\Toro^{d}$ such that
          $$h_{top}(f\mid\Lambda_{\eps})\geq \log sp(f_{\ast,u})-\eps$$ where $u=\dim E^{u}_{\Lambda_{\eps}}$, and $h_H(f)=\log sp(f_{\ast,u})$.
    \end{enumerate}
\end{thma}

\begin{rem}
    In particular, the unique maximal entropy measure given by Theorem~\ref{t:FPS} is actually a hyperbolic measure.
\end{rem}

We wonder what happens if the central direction is no longer one-dimensional. In this direction, we present two examples (absolutely and pointwise) of partially hyperbolic diffeomorphisms on $\Toro^{4}$, both of them with two dimensional center direction. Among other things, both examples show us that the homological entropy is not an {\em upper bound} for the topological entropy. The first example ensures hyperbolic measure with good index, which is sufficient to approximate the homological entropy, i.e.,

\begin{thmb}\label{t:B} \em
    There exists an open subset $\U\subset \Diff^{1}(\Toro^{4})$ such that any $f\in \U$ is an {\em absolutely} partially hyperbolic diffeomorphism with $\dim E^{c}_{f}=2$. Furthermore,
    \begin{enumerate}
     \item $f$ is homotopic to a hyperbolic linear map $A$ that has unstable index $2$.
     \item $f$ satisfies $h_{top}(f)\geq h_{top}(f\mid W^{c}) > h_H(f)=\log sp(A_{\ast,2})$.
     \item Let $\mu$ be an ergodic measure such that $h_{\mu}(f)>\log sp(A_{\ast,1})$ then $\mu$ is a hyperbolic measure with index $2$.
      \item $f$ is topologically transitive.
    \end{enumerate}
\end{thmb}

 In the second example, although hyperbolic measure is guaranteed with entropy as close as desired to the topological entropy, its index is no longer correct. We believe that there exists a hyperbolic measure with good index in order to approximate the homological entropy.

\begin{thmc} \label{t:C} \em
    There exists an open subset $\U\subset \Diff^{1}(\Toro^{4})$ such that any $f\in \U$ is a {\em pointwise} partially hyperbolic diffeomorphism with $\dim E^{c}_{f}=2$. Furthermore,
    \begin{enumerate}
      \item $f$ is homotopic to a hyperbolic linear map $A$ that has unstable index $1$.
      \item $f$ satisfies $h_{top}(f)\geq h_{top}(f\mid W^{c})> h_H(f)=\log sp(A_{\ast,1})$.
      \item Any ergodic measure $\mu$, such that $h_{\mu}(f)>\log sp(A_{\ast,1})$ is a hyperbolic measure with index $2$. If $\Lambda\subset\Toro^{4}$ is a hyperbolic set with index $1$ then           $h_{top}(f\mid\Lambda)\leq h_{top}(A)$.
      \item $f$ is topologically transitive.
    \end{enumerate}
\end{thmc}

In the theorems above, $W^{c}$ denotes some periodic central leaf.

\section{Definitions and Notations}

In this section, definitions and terminology related to the complexity of a system, normal hyperbolicity and foliation tools, are introduced. We recommend the books~\cite{KatHass, Walters} for a better understanding and a source of background reading.

\subsection{Exponential Volume Growth}
Given a sequence of numbers $(a_n)\subset (0,\infty]$, the following limit defined by
\[\tau(a_n)=\limsup_{n\to\infty}\frac{1}{n}\log a_n\] is used to denote the {\em exponential growth rate}. The numbers (topological entropy, metric entropy, entropy at homology level, volume growth of the foliation), that will appear in the definitions below, are the exponential growth rate of a particular given sequence.

\medskip

{\em Topological Entropy}: Consider a continuous map, $f\colon M\to M $, of a compact metric space $M$, with distance function $d$. A set $E\subset M$ is said to be $(n,\eps)$-{\em separated} if for every $x\neq y \in E$ there exists $i\in \{0,\dots,n-1\}$ such that $d(f^{i}x,f^{i}y)\geq\eps$. Let $s(n,\eps)$ be the maximal cardinality of an $(n,\eps)$-separated set of $M$, notice that by compactness, this number is finite. One defines
\[h(f,\eps)=\limsup_{n\to\infty}\frac{1}{n}\log s(n,\eps),\] which represents the exponential growth rate of a sequence $a_n=s(n,\eps)$. The topological entropy, as defined by Bowen, is the ``greatest'' of all
those growth rates, \[h_{top}(f)=\lim_{\eps\to 0} h(f,\eps)=\sup_{\eps>0} h(f,\eps).\]

\medskip

{\em Metric Entropy}: We recall the definition of metric entropy as defined by B. Katok, with respect to the Borel probability $f$-invariant ergodic measure, $\mu$. In the literature this number is said to be one which measures the
complexity of the orbits of a system that are relevant for a measure $\mu$.

For $0<\delta<1$, $n\in\Nat$ and $\eps>0$, a finite set of $E\subset M$ is called an $(n,\eps,\delta)$-{\em covering} set if the union of the all $\eps$-balls, $B_{n}(x,\eps)= \set{y\in \, M | \,\ d(f^{i}x,f^{i}y)<\eps}$, centered at points $x\in E$  has $\mu$-measure greater than $\delta$. Subsequently we consider the set $N_{\delta}(n,\eps,\delta)$ as being the smallest possible cardinality of a $(n,\eps,\delta)$-covering set (i.e. $N_{\delta}(n,\eps,\delta)=\min\set{\sharp E\ | \ \ E \ \ \text{is
a} \ \ (n,\eps,\delta)-\text{covering set}})$. Once again, the metric entropy of $f$ with respect to measure $\mu$ is an exponential growth rate of a sequence $a_n=N_{\delta}(n,\eps,\delta)$, i.e., \[h_{\mu}(f)=\lim_{\eps\to 0}\limsup_{n\to\infty}\frac{1}{n}\log N_{\delta}(n,\eps,\delta).\]

A very significant result in the theory relating the metric entropy and the topological entropy is the so called {\em entropy variational principle}~\cite{Man83}, which states that the topological entropy is achieved by taking the supremum of the metric entropies of all invariant measures of the systems: \[h_{top}(f)=\sup_{\mu}h_{\mu}(f). \]
If there is $\mu$ such that $h_{top}(f)=h_{\mu}(f)$, then $\mu$ is said to be a {\em maximal} entropy measure.

\medskip

{\em Homological Entropy}: For a continuous map $f\colon M\to M$ consider the sequence of homomorphisms $f_{\ast,k}\colon H_k(M,\Real)\to H_k(M,\Real)$, on the homology groups with real coefficients, where $k=0,\dots,d=\dim M.$
The homological entropy, denoted by $h_H(f)$, is defined as follows, \[h_H(f)=\max_{k} \log
sp(f_{\ast,k}).\]

Notice that this real number maximizes the above over all $k\in\{0,\cdots,d\}$, and it is the exponential growth rate of the sequence $a_n=\|f^{n}_{\ast,k}\|$. The well known property in the theory is that the action induced in homology is invariant under homotopy.

\medskip

{\em Volume Growth of the Foliation}: Consider $f\colon M\to M$ a diffeomorphism on a $d$-dimensional compact Riemannian manifold, $M$. Let us consider $\W=\{W(x)\}_{M\ni x}$ a $u$-dimensional foliation on $M$ which is invariant under $f$, i.e. $fW(x)=W(fx)$. For any $x\in M$, let $W_r(x)$ be the $u$-dimensional disk on $W(x)$ centered at $x$, with radius $r>0$.

Let $\Vol(W)$ denote the {\em volume} of a sub-manifold $W$ computed with respect to the induced metric on $W$. One can consider for each disk $W_r(x)$ the exponential volume growth rate of its iterated under the application $f$, as \[ \chi_{W}(x,r)=\limsup_{n\to\infty} \frac{1}{n}\log\Vol(f^{n}W_r(x)),\]
where we then consider the maximum volume growth rate of $\W$
\[\chi_{\W}(f)=\sup_{M\ni x} \chi_{\W}(x,r).\]
We refer to `exponential growth rate of unstable disks' when we are dealing with a partially hyperbolic diffeomorphism and it is considered the corresponding unstable foliation.

\subsection{Some tools}

We consider here some definitions and terminology to be used throughout the remaining sections.
\medskip

{\em Non-degeneracy}: A $u$-form $\omega$ on a manifold $M$ is a choice, for each $M\ni x$, of an alternating $u$-multilinear map $\omega_x\colon T_xM\times\cdots\times T_xM\to\Real$, which depends smoothly on $x$. It is denoted by $\Lambda^{u}(M)$ the set of all $u$-forms. A {\em
closed} $u$-form
$\omega\in\Lambda^{u}(M)$ is said to be {\em non-degenerate} on a tangent
subbundle $E^{u}\subset TM$ (with fiber dimension equal to $u$) if for every
$M\ni x$ and any set of linearly independent vector
$\{\ve_1,\ldots,\ve_u\}\subset E^{u}_x$ the following holds,
$\omega_x(\ve_1,\ldots,\ve_u)\neq 0$.

\medskip

{\em Cone Structure}: Let $V$ be a $d$-dimensional vector space with an inner product structure $\seq{\cdot,\cdot}$, and let $\norm{\cdot}$ be the induced norm. Consider a $u$-dimensional subspace $E\subset V$. Let $E\oplus E^{\bot}=V$ be a splitting of $V$. Given $0<\alpha\in\Real$ we define the $\alpha-cone$ with {\em core} $E$ as the following set, \[ \C^{u}_{\alpha}(E)=\set{(\ve,\ve^{\bot})\,\,| \,\,\ve\in E,\,\, \ve^{\bot}\in E^{\bot}\,\, \text{and}\,\, \norm{\ve^{\bot}}\leq\alpha\norm{\ve}}.\]

A continuous {\em cone field} on a subset $K$ of a manifold $M$ is a continuous association of cones $\set{\C_x}$ in the vector spaces $T_xM$ of tangent vectors on $M$, $x\in K$.

A useful condition for partial hyperbolicity involving cone fields is the {\em cone criterium} ~\cite[Appendix B~]{BDV05}, which establishes that partial hyperbolicity is equivalent to having cone fields on $M$ with invariance and increase property for forward iterations and backward iterations.

\medskip

{\em Lyapunov Exponents}: Let us consider a $C^{1}$-diffeomorphism $f$ of a $d$-dimensional manifold $M$ and an ergodic measure $\mu$. From Oseledet's theorem~\cite{Man83} it follows that there exist real numbers $\lambda_1<\lambda_2\cdots<\lambda_m$, $m\leq d$, called {\em Lyapunov exponents}, and a decomposition of the tangent bundle $T_xM = E_1(x)\oplus\cdots\oplus E_m(x)$ such that for every $1\leq j\leq m$ and every $0\neq\ve\in E_j(x)$ we have \[ \lim_{n\to\pm\infty} \frac{1}{n}\log\norm{D_x f^{n}\ve}=\lambda_j.\]
An $f$-invariant ergodic measure $\mu$ is said to be {\em a hyperbolic measure} if none of the Lyapunov exponents for $\mu$ are zero and there exist Lyapunov exponents with different signs.

\medskip

{\em Normally Hyperbolic Foliation and Lamination}: We recall some results about the persistence of normally hyperbolic compact laminations~\cite{HPS77}. A {\em continuous foliation}, $\F=\set{\F_x}_{M\ni x}$, on a manifold $M$ is a division of $M$ into disjoints submanifold, $\F_x$, called {\em leaves} of the foliations (the leaves have the same dimension, say $u$). Each
leaf is connected, such that each point $M\ni p$ admits a neighborhood \, $U\ni p$ and there exists a homeomorphism $\varphi\colon D^{u}\times D^{m-u}\to U$ \,(called {\em foliation box}) sending each $D^{u}\times \{y\}$ into the leaf through $\varphi(0,y)$. If the foliations boxes are $C^{1}$, $\F$ is said to be {\em $C^{1}$-foliation}. A $C^{1}$-foliation is said to be a $C^{1}$-{\em lamination} if the tangent planes of the leaves give a continuous $u$-plane subbundle of $TM$.

Consider a lamination $\El$ and let $f\colon M\to M$ be a $C^{1}$-diffeomorphism {\em preserving} lamination $\El$ (the dynamic sent each leaf into a leaf). We will say that $\El$ is {\em normally hyperbolic} if there exists a $Df$-invariant splitting, $TM=E^{u}\oplus T\El \oplus E^{s}$, of the tangent bundle, where the decomposition is partially hyperbolic.

Two laminations $\El_f$ and $\El_g$ invariant under $f$ and $g$ respectively are said to be {\em leaf conjugate} if there exists a homeomorphism $h$ such that for every $M\ni x$, $h$ carries laminae to
laminae, i.e. $h(\El_f(x))=\El_g(hx)$, and at level of leaves $h$ behaves as a usual conjugation, i.e. $h(\El_f(fx))=\El_g(g\circ hx)$.

A lamination $\El$, which is preserved by $f$, is {\em structurally stable} if there exists a neighborhood $\U\ni f$ such that each $g\in\U$ admits some $g$-invariant lamination $\El_g$, which is leaf conjugated to $\El$.

We highlight a result in the theory, concerning the condition that is required to keep intact a lamination under dynamic perturbations~\cite{HPS77}.
\begin{thmx}
 If $f$ is a $C^{1}$ diffeomorphism of $M$ which is normally hyperbolic at the $C^{1}$-foliation $\F$, then $(f,\F)$ is structurally stable.
\end{thmx}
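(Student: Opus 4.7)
The plan is to follow the classical Hirsch--Pugh--Shub strategy: realize the perturbed invariant lamination as a graph over $\F$, then construct the leaf conjugacy, both via contraction-mapping arguments enabled by the normally hyperbolic splitting $TM=E^{u}\oplus T\F\oplus E^{s}$.

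First I would parametrize nearby foliations by sections of the normal bundle. Using a continuous bundle metric and the exponential map, a tubular neighborhood of each leaf is identified with (a piece of) the normal bundle $N = E^{u}\oplus E^{s}$, so a $C^{0}$-small candidate for the perturbed lamination is encoded as a continuous section $\sigma\colon M\to N$ of small sup-norm. For $g$ sufficiently $C^{1}$-close to $f$, the image $g^{-1}(\mathrm{graph}\,\sigma)$ is again the graph of a section $T_{g}(\sigma)$; this defines the graph transform on a closed ball in the Banach space of continuous sections of $N$. The domination inequalities give two contraction estimates: forward iteration contracts the $E^{s}$-component relative to the center, and backward iteration contracts the $E^{u}$-component relative to the center. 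Coupling the forward graph transform on the $E^{s}$-factor with the inverse graph transform on the $E^{u}$-factor produces a uniform contraction of $T_{g}$ in an adapted norm, whose unique fixed point $\sigma_{g}$ encodes a $g$-invariant set. A plaque-expansivity argument then shows this set is laminated by leaves close to those of $\F$, giving the desired $\El_{g}$.

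Next I would construct the leaf conjugacy $h$ by a second fixed-point argument. Writing $h(x)=\exp_{x}(\eta(x))$ with $\eta$ a continuous section of $E^{u}\oplus E^{s}$ and imposing $h(\F(fx))=\El_{g}(g\circ h x)$ yields, after quotienting out motion along leaves, a fixed-point equation $\eta=\Phi_{g}(\eta)$; the same dominated-splitting estimates make $\Phi_{g}$ a contraction on a neighborhood of zero, so $h$ exists, is continuous, is close to the identity, and carries leaves of $\F$ to leaves of $\El_{g}$ while conjugating the induced dynamics at the leaf level. The main obstacle is the precise contraction step: $T\F$ and the normal bundles $E^{u}$, $E^{s}$ are only continuous and the dynamics is only $C^{1}$, so the angles between them need not be smooth and the nonlinear error coming from the exponential identifications and from replacing $f$ by $g$ must be uniformly controlled. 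Working with an adapted Finsler metric that realizes the pointwise domination globally is essential; this is exactly what the invariant section/fiber-contraction theorem of Hirsch--Pugh--Shub packages, and I would invoke it rather than re-deriving the estimates by hand.
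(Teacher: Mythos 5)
The paper does not actually prove this statement: it is quoted verbatim from Hirsch--Pugh--Shub~\cite{HPS77} and used as a black box, so there is no internal proof to compare against. Your sketch reproduces the standard HPS strategy (graph transform, invariant section/fiber-contraction theorem, leaf conjugacy by a second fixed-point argument), which is exactly the source the paper is relying on.

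Two points would need repair before this could stand as a proof. First, the object on which the graph transform acts is not a single continuous section $\sigma\colon M\to N$ of small sup-norm: the graph of such a section is just a perturbed copy of all of $M$ sitting inside the tubular neighborhood of the zero section, not a lamination. In~\cite{HPS77} one works with the leaf immersion: for each leaf $L$ of $\F$ one takes sections of $N|_{L}$ over $L$ (equivalently, one applies the invariant manifold machinery to the disjoint union of leaves), and the fixed point is a coherent family of perturbed leaves whose union is $\El_{g}$. Second, and more substantively, you invoke ``a plaque-expansivity argument'' without establishing plaque expansivity. The general HPS stability theorem for normally hyperbolic laminations carries plaque expansiveness of $(f,\F)$ as a hypothesis; the statement here omits it precisely because a $C^{1}$ foliation (as opposed to a mere lamination with continuous tangent plane field) is \emph{automatically} plaque expansive --- this is a separate theorem in~\cite{HPS77} and is the real content distinguishing the quoted statement from the generic one. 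Without that step your argument only proves the weaker statement with the extra hypothesis. Since the paper itself treats the whole result as external, citing~\cite{HPS77} for both ingredients is legitimate, but the sketch should say explicitly that automatic plaque expansivity of $C^{1}$ foliations is being used, and that uniqueness of the invariant plaquation (hence well-definedness of the leaf conjugacy) is what plaque expansivity buys you.
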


\section{Examples}

This section is devoted to giving some examples for which the basic question is positively answered. In the examples, we basically identify two things: (1) {\em Upper bound problem}: to know whether the homological entropy is an upper bound for the topological entropy. (2) {\em Approximation problem}: the existence of hyperbolic sets with good index of hyperbolic sets, i.e. hyperbolic sets with unstable index equal to algebraic index.

\subsection{Linear Anosov map}

This is the first example to be considered, being that the basic question arises from it by observing its properties. As a prototype, it is worthwhile to keep in mind and consider the Thom-Anosov diffeomorphism.

Let $f_A$ be a hyperbolic linear automorphism induced by $A\in SL(d,\Int)$. As we know, we have a canonical, invariant (under $A$) splitting $T\Toro^{d}= E^{u}_A\oplus E^{s}_A$ of the tangent bundle, where $E^{u}_A$ is the eigenspace corresponding to eigenvalues $\lambda_i$ greater than 1 in absolute value and $E^{s}_A$ the eigenspace of the remaining eigenvalues. Suppose that $u=\dim E^{u}_A$.

Then, on one hand $h_{top}(f_A)=\sum_{|\lambda_i|>1}\log |\lambda_i|$. On the other hand, by assuming that the eigenvalues $\lambda_i$ are sorted in order of decreasing absolute value, we have the equality $sp({f_A}_{\ast,j})=\abs{\lambda_1\cdots\lambda_j}$, and thus by hypothesis, $h_H(f)=\log sp(A_{\ast,u})$.

\subsection{Anosov system with orientable bundle}

Here, we consider an Anosov diffeomorphism with orientable stable and unstable bundles. As such, one has a similar result to the Anosov lineal case.

Was proved by Ruelle--Shub--Sullivan that if $f\colon M\to M$ is an Anosov diffeomorphism such that the corresponding unstable and stable sub-bundles $E^{u}_{f}$ and $E^{s}_{f}$ are orientable, then $h_{top}(f)=h_H(f)$ and $h_H(f)=\log sp(f_{\ast,u})$. It is also shown that $u=\dim E^{u}_{f}$. To go into more details we refer the reader to~\cite{RS75}.

\subsection{Smooth examples on surfaces}

These examples, due to Katok, are no longer so trivial; the importance is that the approximation problem by hyperbolic sets is tackled entirely assuming existence of hyperbolic measures.

Let us begin by stating one of Katok's results~\cite{Katok80}. Assume that $f\colon M\to M$ is $C^{1+\alpha}$-diffeomorphism, $\alpha>0$, of a manifold $M$. Let $\mu$  be an $f$-invariant ergodic and hyperbolic measure such that $h_{\mu}(f)>0$. Then for every $\eps>0$, there exists $\Lambda\subset M$ a hyperbolic set such that $h_{top}(f\mid \Lambda)>h_{\mu}(f)-\eps$.

If $M$ is a surface, the result can be refined. In fact, by the variational principle of entropy, we know that $h_{top}(f)=\sup_{\mu} h_{\mu}(f)$. Furthermore, by Ruelle's inequality for entropy~\cite{Man83}, we have that one of the Lyapunov exponents of $\mu$ (assume that $h_{\mu}(f)>0$) is positive and the other is negative.
Therefore, for surfaces, the topological entropy of a $C^{1+\alpha}$-system can be approximated by topological entropy of hyperbolic sets of index one.

Moreover, what we need to observe is that if the induced action on homology level $f_{\ast,1}\colon H_1(M,\Real)\to H_1(M,\Real)$ is hyperbolic, we then actually have a similar approximation of $h_H(f)$ by entropy of hyperbolic sets of index one. This is due to Manning's result, cited in~\cite{Katok86}, which establishes that $h_{top}(f)\geq\log sp(f_{\ast,1})$.

\subsection{Absolutely partially hyperbolic diffeomorphisms on $\Toro^{3}$ isotopic to a linear Anosov}
This is the first example in a non-hyperbolic setting to be analyzed, which is important for our context.

Let $A\colon\Toro^{3}\to\Toro^{3}$ be a hyperbolic linear
automorphism, and consider $f\colon \Toro^{3}\to \Toro^{3}$ an absolutely partially hyperbolic diffeomorphism, such that $f$ is homotopic to $A$. Denote with $\lambda^{s}\leq \lambda^{c}\leq \lambda^{u}$ the Lyapunov exponents associated to an ergodic $f$-invariant measure, $\mu$. Then, Ures
in~\cite{Ures12} has proved the following result:
There exists $\mu$, a unique ergodic $f$-invariant entropy maximizing
measure of $f$. This measure is hyperbolic and the central Lyapunov exponent,
$\lambda^{c}$, has the same sign as the linear part, $\lambda^{c}_{A}$.

Notice that, by choosing the inverse map, $f^{-1}$, we may actually assume that the linear map has a 2-dimensional unstable direction. In the context referred, if $f\colon\Toro^{3}\to\Toro^{3}$ is $C^{1}$, then for every $\eps>0$, there exists $\Lambda\subset M$ a hyperbolic set with also 2-dimensional unstable bundle, such that $h_{top}(f\mid\Lambda) \geq h_H(f)-\eps$.

\subsection{Certain perturbations of linear Anosov diffeomorphism with a dominated splitting}
Far from partial hyperbolicity there exists a family of diffeomorphisms which are in the isotopy class of an Anosov diffeomorphism. Elements of such a family, under certain conditions, retain the existence and uniqueness property of a measure of maximum entropy in the case of central dimension larger than one~\cite{BF13}.

\section{Proof of Theorem A}
Our goal in this section is to give a full sketch of the proof of Theorem A. To prove Theorem A, we combine the existence of a closed differential $u$-form $\omega\in\Lambda^{u}(\Toro^{d})$ which is non trivial on $\W^{u}_A$, the unstable bundle of hyperbolic linear automorphism $A$ and a result, due to Saghin, relating the exponential growth rate of unstable disks.

 \begin{thm} \label{t:A} Let $f\in$ {\em $\PH^{0}_{A}$}$(\Toro^{d})$ with $\dim E^{c}_{f}=1$. Then the following holds:
   \begin{enumerate}
      \item Let $\mu$ be an ergodic $f$-invariant measure, $0<\eps_0<\abs{\lambda^{c}_A}$. Suppose that $h_{\mu}(f)>h(A)-\eps_0$ then $\mu$ is a hyperbolic measure with index $u=\dim E^{u}_A$.
      \item For every $\eps>0$ there exists a hyperbolic set $\Lambda_{\eps}\subset\Toro^{d}$ such that
          $$h_{top}(f\mid\Lambda_{\eps})\geq \log sp(f_{\ast,u})-\eps$$ where $u=\dim E^{u}_{\Lambda_{\eps}}$, and $h_H(f)=\log sp(f_{\ast,u})$.
    \end{enumerate}
 \end{thm}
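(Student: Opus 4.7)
The plan is to prove Theorem A by first establishing (1) --- that high-entropy ergodic measures are hyperbolic with index $u=\dim E^{u}_A$ --- and then deducing (2) from (1) by applying a Katok-style approximation of hyperbolic measures by hyperbolic sets to the unique maximal measure of Theorem~\ref{t:FPS}. The main analytic input for (1) is the existence of a closed differential form on $\Toro^{d}$ adapted to the Anosov splitting of $A$, combined with Saghin's volume-growth estimate and Ruelle's inequality.

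For statement (1), by passing to $f^{-1}$ if necessary I assume $\lambda^{c}_A>0$, so that $u:=\dim E^{u}_A=\dim E^{u}_f+1$ and $E^{cu}_f:=E^{c}_f\oplus E^{u}_f$ is $u$-dimensional. I construct a closed $u$-form $\omega\in\Lambda^{u}(\Toro^{d})$ non-degenerate on the Anosov unstable subspace $E^{u}_A$, as a suitable combination of basic wedges $dx_{i_1}\wedge\cdots\wedge dx_{i_u}$ chosen from the eigendirections of $A\in SL(d,\Int)$; a continuity argument along the isotopy inside $\PH^{0}_{A}(\Toro^{d})$ shows $\omega$ remains non-degenerate on $E^{cu}_f$ as well. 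Dynamical coherence in the $1$-dimensional center case furnishes the invariant $cu$-foliation $\W^{cu}_f$, and Saghin's comparison of $(f^{n})^{*}\omega$ with $(A^{n})^{*}\omega$ (cohomologous since $\omega$ is closed and $f\sim A$) integrated over a $u$-disk in $\W^{cu}_f$ yields
\[
\chi_{\W^{cu}_f}(f)\ \geq\ \log sp(A_{\ast,u})\ =\ h(A).
\]
The parallel construction with a closed $(u-1)$-form adapted to the fast unstable directions of $A$ gives $\chi_{\W^{u}_f}(f)\geq h(A)-\lambda^{c}_A$. Combining these lower bounds with the identity $h_{top}(f)=h(A)$ from Theorem~\ref{t:FPS}, via a volume-growth inequality $h_{top}(f)\geq\chi_{\F}(f)$ for invariant volume-expanding foliations, forces the matching equalities $\chi_{\W^{cu}_f}(f)=h(A)$ and $\chi_{\W^{u}_f}(f)=h(A)-\lambda^{c}_A$. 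Then, for any ergodic $\mu$ with $h_{\mu}(f)>h(A)-\eps_0$, Ruelle's inequality yields
\[
h_{\mu}(f)\ \leq\ \sum_{E^{u}_f}\lambda_{i}^{\mu}+\max(\lambda^{c}(\mu),0)\ \leq\ \chi_{\W^{u}_f}(f)+\max(\lambda^{c}(\mu),0)\ =\ h(A)-\lambda^{c}_A+\max(\lambda^{c}(\mu),0),
\]
forcing $\max(\lambda^{c}(\mu),0)>\lambda^{c}_A-\eps_0>0$ and hence $\lambda^{c}(\mu)>0$; combined with partial hyperbolicity, $\mu$ has exactly $u$ positive Lyapunov exponents.

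For statement (2), applying Katok's approximation theorem (or its extension to the $C^{1}$-partially hyperbolic setting) to the unique maximal entropy measure $\mu_{0}$ of Theorem~\ref{t:FPS} --- which by (1) is hyperbolic of index $u$ with entropy $h_{\mu_{0}}(f)=h(A)=\log sp(f_{\ast,u})$ --- produces for each $\eps>0$ the desired hyperbolic set $\Lambda_\eps$ of unstable dimension $u$ satisfying $h_{top}(f\mid\Lambda_\eps)\geq\log sp(f_{\ast,u})-\eps$. The main obstacle is the sharp upper bound $\chi_{\W^{u}_f}(f)\leq h(A)-\lambda^{c}_A$: Saghin's form-based method only delivers lower bounds, and the matching upper bound must be teased out of the global identity $h_{top}(f)=h(A)$ by carefully separating the volume-expansion contributions of the partially hyperbolic unstable and central bundles --- a delicate point in $C^{1}$ regularity, where Yomdin's smooth volume-growth theorem is unavailable.
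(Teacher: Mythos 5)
Your overall architecture matches the paper's: propagate a closed differential form adapted to the unstable bundle from $A$ to every $f$ in the connected component, convert it into control of unstable volume growth, feed that into a Ruelle-type inequality to pin down the sign of $\lambda^{c}(\mu)$, and finish with Katok's horseshoe approximation (in its $C^{1}$-with-domination version) applied to the maximal entropy measure. However, as you yourself flag, your argument has a genuine hole at its load-bearing step: you only extract the \emph{lower} bound $\chi_{\W^{u}_f}(f)\geq h(A)-\lambda^{c}_A$ from the form, whereas your final Ruelle chain $h_{\mu}(f)\leq \sum_{E^{u}_f}\lambda_i^{\mu}+\max(\lambda^{c}(\mu),0)\leq \chi_{\W^{u}_f}(f)+\max(\lambda^{c}(\mu),0)$ is vacuous without the matching \emph{upper} bound $\chi_{\W^{u}_f}(f)\leq h(A)-\lambda^{c}_A$. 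Your proposed escape route --- forcing the upper bound from $h_{top}(f)=h(A)$ via $h_{top}(f)\geq\chi_{\F}(f)$ for the $cu$-foliation and then ``separating contributions'' --- does not work: that volume-growth inequality for a center-unstable foliation is exactly the kind of statement that fails or is unknown in $C^{1}$ regularity (it is a Yomdin-type bound), and even granting it, equality of $\chi_{\W^{cu}_f}(f)$ with $h(A)$ does not by itself split off the central contribution to give $\chi_{\W^{u}_f}(f)=h(A)-\lambda^{c}_A$.

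The missing ingredient is that the non-degenerate closed form gives the upper bound, not merely the lower one: if $\omega$ is a closed $u'$-form ($u'=\dim E^{u}_f$, the \emph{strong} unstable dimension) non-degenerate on $E^{u}_f$, then $\omega$ restricted to unstable disks is uniformly comparable to the induced volume form, so $\Vol(f^{n}W)\asymp \abs{\int_{f^{n}W}\omega}=\abs{\int_{W}(f^{n})^{\ast}\omega}\lesssim \norm{f^{n}_{\ast,u'}}$, yielding the equality $\chi_{u}(f)=\log sp(f_{\ast,u'})=\log sp(A_{\ast,u'})=h(A)-\lambda^{c}_A$. This is precisely Theorem~\ref{t:Saghin} as quoted, and combined with the Hua--Saghin--Xia inequality $h_{\nu}(f)\leq\lambda^{c}(\nu)+\chi_{u}(f)$ it closes your argument with no need for the auxiliary $cu$-form at all. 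Two smaller points: your ``continuity argument along the isotopy'' for the persistence of non-degeneracy is doing real work --- openness is a cone argument, but closedness requires pushing the form forward by a high iterate so that the limiting unstable bundle enters the cone of positivity (Proposition~\ref{p:OpenClose}); and a single fixed form built from the eigendirections of $A$ need not remain non-degenerate on $E^{u}_f$ across the whole component, so the form must be allowed to change along the way.
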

While on $\Toro^{3}$, the result follows directly from the quasi-isometry
property of unstable leaves; however, in a larger dimension it follows from the
existence of a closed $u$-form which is required to be non-degenerate on the
unstable direction. So, to start dealing with the problem, the next key
proposition shows that, to have a positive definite differential form is a
non-isolated property, as well as it being a property that extends to the closure.

\begin{prop} \label{p:OpenClose}
The existence of a closed $u$-form, $\omega$, which is non-degenerate on the
unstable bundle $E^{u}$ is an open and closed condition in {\em
$\PH_{A}$}$(\Toro^{d})$.
\end{prop}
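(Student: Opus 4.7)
For openness, my plan is the following. Suppose $f_0\in\PH_A(\Toro^d)$ admits a closed $u$-form $\omega$ non-degenerate on $E^u_{f_0}$. I would invoke the standard fact that the unstable sub-bundle varies continuously in $C^0$ under $C^1$-perturbations of $f$ in $\PH_A$. By compactness of $\Toro^d$, non-degeneracy provides a uniform bound $\abs{\omega_x(v_1,\dots,v_u)}\geq\delta>0$ for every orthonormal frame of $E^u_{f_0,x}$. For $f$ sufficiently $C^1$-close to $f_0$, the bundle $E^u_f$ is uniformly close to $E^u_{f_0}$, so the same $\omega$ keeps $\abs{\omega_x|_{E^u_{f,x}}}$ bounded below and hence still witnesses non-degeneracy, giving $f\in S$.

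\medskip

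For closedness, let $f_n\to f$ in $\PH_A(\Toro^d)$ with witnesses $\omega_n$. My plan is to extract a limit witness by exploiting the finite-dimensionality of $H^u(\Toro^d,\Real)$. First I would check that each $[\omega_n]$ is non-trivial: an exact form $d\eta$ cannot be non-degenerate on the unstable foliation, since by Stokes the integral of $d\eta$ over a disk $D$ in an unstable leaf is controlled by $\Vol(\partial D)$, while non-degeneracy forces $\int_D\omega_n$ to grow at least like $\Vol(D)$, which dominates $\Vol(\partial D)$ along a suitable nested sequence of disks by the isoperimetric behavior of unstable leaves of partially hyperbolic systems. On $\Toro^d$ the harmonic representatives of $H^u$ are constant-coefficient $u$-forms, living in the finite-dimensional space $\Lambda^u(\Real^d)^*$; so I would replace $\omega_n$ by its harmonic representative $\omega_n^h$, normalize $\norm{\omega_n^h}=1$, and extract a convergent subsequence $\omega_n^h\to\omega_\infty$ with $\omega_\infty\neq 0$, which becomes the candidate witness for $f$.

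\medskip

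The main obstacle will be showing pointwise non-degeneracy of $\omega_\infty$ on $E^u_f$: the exact differences $\omega_n-\omega_n^h=d\eta_n$ are uncontrolled in $C^0$, so it is not immediate that the harmonic representative is itself non-degenerate on $E^u_{f_n}$, let alone that the property passes to the limit. My approach to overcome this would be a foliation-cycle argument: one shows that $\W^u_f$ for $f\in\PH_A(\Toro^d)$ carries a natural Ruelle--Sullivan cycle $[C_f]\in H_u(\Toro^d,\Real)$ depending continuously on $f$, and that non-degeneracy of a closed $\omega$ on $E^u_f$ corresponds to $[\omega]$ lying in an open cone of $H^u(\Toro^d,\Real)$ cut out by positive pairings against such cycles. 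Re-normalizing via $\langle[C_{f_n}],[\omega_n]\rangle$ gives bounded classes whose limit lies in the closed cone of $f$, and a small perturbation within the finite-dimensional cohomology places it in the open cone, yielding a witness for $f$. The most delicate step, and the place where I expect to need additional input (such as minimality of $\W^u_f$, robust transitivity, or the stability of the foliation cycle from the isotopy to $A$), is translating integrated positivity against the foliation cycle into strict pointwise non-degeneracy at every $x\in\Toro^d$.
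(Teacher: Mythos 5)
Your openness argument coincides with the paper's (uniform positivity of $\omega$ on a cone about $E^{u}_{f_0}$, plus $C^{0}$-continuity of the unstable bundle under $C^{1}$-perturbation), and is fine. The problem is the closedness half, and the gap is exactly where you yourself locate it --- but it is not a technicality that ``additional input'' will easily fill. By passing from the actual witnesses $\omega_n$ to their cohomology classes (harmonic, i.e.\ constant-coefficient, representatives and Ruelle--Sullivan pairings) you discard precisely the pointwise information the statement asks for. Positivity of the pairing $\langle [C_f],[\omega_\infty]\rangle$ is an \emph{averaged} condition over the foliation, whereas non-degeneracy must hold at \emph{every} fiber: for a nonlinear $f\in\PH_A(\Toro^d)$ the plane $E^{u}_f(x)$ varies with $x$, and a constant-coefficient $u$-form with positive average pairing can perfectly well vanish on $E^{u}_f(x)$ at individual points. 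Minimality of $\W^{u}_f$ or robust transitivity will not rule this out either, since these are again recurrence/global properties. The only general mechanism for upgrading ``positive on all foliation cycles'' to ``pointwise positive for some closed representative'' is Sullivan's Hahn--Banach argument on cones of foliation currents, which is a much heavier tool than this proposition requires and which you would still have to carry out.

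The paper's closedness proof never leaves the pointwise level and is worth contrasting. Fix $N$ large so that $E^{cs}_{f_N}$ is uniformly transverse to $E^{u}_{f}$ and $E^{u}_{f_N}$ is $C^{0}$-close to $E^{u}_{f}$. The single form $\omega_N$ is positive on a cone about $E^{u}_{f_N}$, but that cone may be too thin to contain $E^{u}_{f}$. The fix is dynamical rather than cohomological: forward iteration by $f_N$ attracts any bundle uniformly transverse to $E^{cs}_{f_N}$ into an arbitrarily thin cone about $E^{u}_{f_N}$, so there is $m$ with $D_xf_N^{m}\bigl(E^{u}_{f}(x)\bigr)\subset \C\bigl(E^{u}_{f_N}(f_N^{m}x),\beta\bigr)$ for all $x$. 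The transported form $(f_N^{m})^{\ast}\omega_N$ is still closed (pullback by a diffeomorphism) and is, by construction, non-degenerate on $E^{u}_{f}$ at every point. This keeps the entire argument local and elementary; your Stokes/isoperimetric step showing $[\omega_n]\neq 0$ is plausible (it is essentially the mechanism behind Saghin's theorem quoted later in the paper) but becomes unnecessary.
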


\begin{proof}
The condition of being open is trivial. In fact, take $f\in \PH_A(\Toro^{d})$
and let $\omega\in\Lambda^{u}(\Toro^{d})$ be a closed $u$-form which we
assume to be non-degenerate on $E^{u}_{f}$ bundle.
We need to show that there exists a neighborhood \, $\U$ of $f$ in
$\PH_A(\Toro^{d})$, such that for every $g\in \U$ there exists a closed
$u$-form, $\omega_g$, which is non-degenerate on $E^{u}_{g}$.\\
We have that there exists $0<\alpha\in\Real$ such that $\omega_x$ is positive
definite over a cone $\C(E^{u}_{f}(x),\alpha)$. Notice that by the
continuity of bundles, $\alpha$ can be taken locally constant. Furthermore,
by using a compactness argument, $\alpha$ does not depend on $x\in\Toro^{d}$.
On the other hand, for $g\in \PH(\Toro^{d})$ $C^{1}$-close enough to $f$,
also we have that $E^{u}_{g}(x)\subset\C(E^{u}_{f}(x),\alpha)$. So the
same $u$-form, $\omega$, works for $g$.

In order to prove the closed property, consider the $C^{1}$-convergence
$f_n\to f$ where $f_n, f\in \PH_A(\Toro^{d})$ and a closed $u$-form,
$\omega_n$, non-degenerate on $E^{u}_n$ the unstable bundle corresponding to
$f_n$. To construct a non-degenerated closed $u$-form associated with $f$, we
attempt to approach $n$ sufficiently so that the `cone axis', where the form
$\omega_n$ is positive, becomes $C^{0}$-close to the unstable bundle of $f$.
Although this can be done in a uniform way and as closely as possible, the
problem is that the cone angle may be small, and as a result not large enough
to contain the unstable bundle of $f$. However this is corrected by pushing
forward the unstable bundle of $f$ so that it fits within the cone.

To do this, take $N>0$ big enough to ensure that the $E^{cs}_{N}$ and
$E^{u}_{f}$ bundles become transversal to each other. We can do this because:
\begin{itemize}
  \item $E^{cs}_{f}$ \, is transversal to \, $E^{u}_{f}$.
  \item $E^{cs}_{n} \to E^{cs}_{f}$ \, $C^{0}$ in a uniform way.
\end{itemize}
So, there exists $0<\alpha_0\in\Real$ such that the angle
$\angle(E^{sc}_N(x),E^{u}_f(x))\geq\alpha_0$ for all $x\in\Toro^{d}$. We can
also modify $N$ in order to have $\angle(E^{u}_N(x),E^{u}_f(x))<\delta$ for
every $x\in\Toro^{d}$, where $\delta>0$ is small.\\
Associated to this $\delta>0$, there exists $m\in\Nat$ so that for every
$x\in\Toro^{d}$ and $\ve\in E^{u}_{f}(x)$ we have
$$D_xf^{m}_N(\ve)\in \C( E^{u}_{N}(f^{m}_{N}x),\beta).$$
To finish the proof of proposition we define $\omega_f\colon\Toro^{d}\to
\Lambda^{u}(\Toro^{d})$ a $u$-form given by
\[\omega_f=(f^{m}_N)_{\ast}\omega_N.\]
Notice that $\omega_f$ is indeed a closed form and by construction it is
also non-degenerate on unstable bundle $E^{u}_{f}$.
\end{proof}

\begin{rem}
Notice that if a diffeomorphism $f$ has a closed $u$-form which is
non-degenerate on unstable bundle, then any element of the connected component
that contains $f$ also has the same property.
\end{rem}

The next two theorems help us conclude the result. The first theorem considers
$f$ a $C^{1}$-partially hyperbolic diffeomorphism with one dimensional center
direction, where Hua, Saghin and Xia proved a refined version of the
Pesin-Ruelle inequality~\cite{HSX08}. And the second theorem, due to Saghin,
establishes the relation between the exponential growth rate of unstable
discs and its corresponding at homology~\cite{Saghin12}.

\begin{thm}[Hua, Saghin, Xia]\label{t:HSX}
Let $\nu$ an ergodic $f$-invariant measure and $\lambda^{c}(\nu)$ its
Lyapunov exponent corresponding to the center distribution. Then the
inequality holds $\,$ $h_{\nu}(f)\leq\lambda^{c}(\nu)+\chi_{u}(f)$.
\end{thm}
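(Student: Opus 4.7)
The plan is to derive the inequality as a refinement of the classical Ruelle inequality, separating the contributions of the three bundles in the partially hyperbolic splitting and controlling the $E^u$-contribution by the geometric volume-growth rate $\chi_u(f)$.

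First I would apply Ruelle's inequality, which bounds $h_\nu(f)$ by the sum of positive Lyapunov exponents of $\nu$, counted with multiplicity. Because $f$ is partially hyperbolic with $\dim E^c=1$, the Oseledets decomposition refines the invariant splitting $E^u\oplus E^c\oplus E^s$: every exponent associated with $E^u$ is strictly positive, every exponent associated with $E^s$ is strictly negative, and $E^c$ contributes the single exponent $\lambda^c(\nu)$. Thus Ruelle's inequality specializes to
$$h_\nu(f)\;\le\;\sum_i m_i\,\lambda_i^u(\nu)\;+\;\max\{0,\lambda^c(\nu)\}.$$

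Second, I would bound the unstable sum by $\chi_u(f)$. Applying Oseledets' theorem to the top exterior power of $Df|_{E^u}$ shows that, for $\nu$-almost every $x$, the Jacobian $\abs{\det Df^n|_{E^u_x}}$ grows exponentially at rate exactly $\sum_i m_i\lambda_i^u(\nu)$. Since $E^u$ is uniformly expanded and the leaves of $\W^u$ admit uniform backward-distortion estimates, this measure-theoretic rate coincides with the geometric volume-growth rate $\chi_{\W^u}(x,r)$ of any small unstable disk $W^u_r(x)$ through $x$. Taking the supremum over $x\in\Toro^d$ gives $\sum_i m_i\lambda_i^u(\nu)\le\chi_u(f)$.

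Combining the two estimates yields $h_\nu(f)\le\chi_u(f)+\lambda^c(\nu)^+$, i.e.\ the stated inequality (with the understanding that when $\lambda^c(\nu)<0$ one automatically gets the sharper bound $h_\nu(f)\le\chi_u(f)$, which still dominates $\chi_u(f)+\lambda^c(\nu)$ provided the center drift is absorbed into the unstable Jacobian via integration along center leaves). The main technical obstacle is the second step: the transition from the $\nu$-a.e.\ Jacobian growth supplied by Oseledets to a \emph{pointwise} geometric estimate on the volume of iterated unstable disks. In the merely $C^1$ setting one cannot invoke smooth bounded distortion, and instead must exploit the uniform expansion of $Df|_{E^u}$ together with the H\"older continuity of $E^u$ coming from domination, in order to convert the measure-theoretic supremum into the geometric supremum that defines $\chi_u(f)$.
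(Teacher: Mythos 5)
The paper offers no proof of this theorem: it is quoted as a black box from Hua--Saghin--Xia [HSX08], so there is no internal argument to compare against. Judged on its own, your proposal has a genuine gap at its central step.

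The decisive problem is your second step, the claimed inequality $\sum_i m_i\lambda_i^u(\nu)\le\chi_u(f)$ for an \emph{arbitrary} ergodic invariant measure $\nu$. This is false in general. The quantity $\Vol(f^nW^u_r(x))=\int_{W^u_r(x)}\abs{\det Df^n|_{E^u}}\,d\Vol$ integrates the unstable Jacobian against \emph{leaf volume}, and leaf volume need not charge the region where $\nu$ concentrates; the ``coincidence'' you assert between the $\nu$-a.e.\ Oseledets rate and the geometric growth rate of disks is exactly what fails. Concretely, take an Anosov diffeomorphism of $\Toro^{2}$ isotopic to a linear map with eigenvalue $\lambda>1$ but possessing a fixed point $p$ whose unstable eigenvalue is $e^{100}\gg\lambda$ (the center bundle plays no role in this step). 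By Theorem~\ref{t:Saghin}, $\chi_u(f)=\log sp(f_{\ast,1})=\log\lambda$, yet $\lambda^u(\delta_p)=100$: the set of points of an unstable disk that shadow the orbit of $p$ for $n$ iterates has leaf volume about $e^{-100n}$, so its enormous Jacobian contributes only $O(1)$ to the disk volume. The ``uniform backward-distortion estimates'' you invoke also do not exist here: points of a fixed unstable disk \emph{separate} under forward iteration, and for merely $C^{1}$ maps $E^{u}$ is continuous but not H\"older. Hence ``Ruelle plus (Jacobian $\le$ volume growth)'' cannot be the proof. The argument in [HSX08] runs the other way around: one bounds the cardinality of $(n,\eps)$-separated or spanning sets directly, using the volume growth of iterated unstable disks to count separation in the $u$-direction (a Przytycki/Yomdin-type count inside foliation boxes), and an Oseledets/Ruelle-type estimate only for the one-dimensional center.

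A secondary point: what is actually true is $h_\nu(f)\le\chi_u(f)+\max\{0,\lambda^{c}(\nu)\}$. The version without the positive part, as transcribed in the paper, already fails for a linear Anosov map of $\Toro^{3}$ viewed as partially hyperbolic with contracting center, and your parenthetical about ``absorbing the center drift into the unstable Jacobian'' is not an argument. This imprecision is harmless for the paper's application (there the relevant center exponent is positive), but even a repaired version of your step two would only ever deliver the positive-part inequality --- which is the correct statement.
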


\begin{thm}[Saghin] \label{t:Saghin}
Let $f\colon M\to M$ be a $C^{1}$-partially hyperbolic diffeomorphism such
that there exists a closed $u$-form, $\omega$, which is non-degenerate on
unstable bundle $E^{u}_{f}$. Then, $\chi_{u}(f)=\log sp(f_{\ast,u})$.
\end{thm}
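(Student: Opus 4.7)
The plan is to establish both inequalities $\chi_u(f)\le \log sp(f_{\ast,u})$ and $\chi_u(f)\ge \log sp(f_{\ast,u})$ using as fundamental bridge the comparison
\[
c_1\cdot \Vol(N) \ \le\ \abs{\int_N \omega} \ \le\ c_2\cdot \Vol(N)
\]
valid for every oriented piece $N$ of an unstable leaf, with constants $c_1,c_2>0$ obtained from non-degeneracy of $\omega$ on $E^{u}$ together with compactness of $M$ (in fact both bounds hold uniformly on a cone field around $E^{u}$ on which $\omega$ is positive definite). Since $\omega$ is closed, so is $(f^n)^\ast\omega$, and the identity $\int_{f^n D}\omega=\int_D (f^n)^\ast\omega$ converts the geometric growth of $\Vol(f^n D)$ into analytic growth of an integral of a closed form over the fixed disk $D$. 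This reduces the problem to relating $\int_D(f^n)^\ast\omega$ to the spectrum of the induced action on $H^u(M,\Real)$.

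For the upper bound I would exploit that $(f^n)^\ast\omega$ represents the cohomology class $(f^n)^\ast[\omega]\in H^u(M,\Real)$, whose norm grows at most like $sp(f_{\ast,u})^n$ up to subexponential factors, since by Poincar\'e duality the spectral radius of $f^\ast$ on $H^u$ equals that of $f_\ast$ on $H_u$. Fixing a basis $h_1,\ldots,h_b$ of harmonic $u$-forms on $M$, I would write $(f^n)^\ast\omega=\sum_i c_i^{(n)} h_i + d\beta_n$, where the coefficients $c_i^{(n)}$ inherit the cohomological growth rate. Then
\[
\int_D (f^n)^\ast\omega \ =\ \sum_i c_i^{(n)}\int_D h_i \ +\ \int_{\partial D}\beta_n,
\]
and the main term is bounded by $C\cdot \Vol(D)\cdot sp(f_{\ast,u})^n$, delivering the desired exponential rate once the boundary piece is controlled.

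For the lower bound I would argue dually: by non-degeneracy $[\omega]\neq 0$ in $H^u(M,\Real)$, and after, if necessary, replacing $\omega$ within its open cone of forms non-degenerate on $E^{u}$, one can arrange that $(f^n)^\ast[\omega]$ attains the full rate $sp(f_{\ast,u})$. Pairing against a homology class $[\sigma]\in H_u(M,\Real)$ on which $f_{\ast,u}^n$ achieves its spectral radius gives $\int_\sigma (f^n)^\ast\omega\gtrsim sp(f_{\ast,u})^n$. A finite cover by foliation boxes of $\W^u$ allows one to decompose $\sigma$ into a signed union of unstable plaques plus lower-dimensional transversal corrections, so some specific unstable disk $D_0$ must already satisfy $\abs{\int_{f^n D_0}\omega}\gtrsim sp(f_{\ast,u})^n$, and the bridge converts this into the required lower bound on $\Vol(f^n D_0)$.

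The main obstacle is controlling the exact-form correction $\int_{\partial D}\beta_n=\int_D d\beta_n$ in the upper bound. A priori $\|\beta_n\|_{C^0}$ can grow faster than $sp(f_{\ast,u})^n$, in which case the boundary term would swamp the harmonic main term. I expect to handle this by (i) invoking elliptic regularity for the Hodge Laplacian and choosing $\beta_n$ via the Green operator, so that $\|\beta_n\|_{C^0}$ inherits an exponential bound from the coexact part of $(f^n)^\ast\omega$; or, failing that, (ii) averaging the estimate over a smooth family $\{D_t\}$ of unstable disks parameterized by a transversal, so that by Stokes the $(u-1)$-dimensional boundary contributions reassemble into a bulk integral of $(f^n)^\ast\omega$ across a foliation box, which is again governed by the harmonic estimate. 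A parallel care is needed in the lower bound to certify that the approximating plaques retain a definite share of the cohomological growth, rather than having it cancel against the transversal correction pieces.
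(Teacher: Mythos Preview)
The paper does not prove this statement: Theorem~\ref{t:Saghin} is quoted from~\cite{Saghin12} and used as a black box in the proof of Theorem~\ref{t:A}, so there is no in-paper argument to compare against.

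Judged on its own, your outline locates the right bridge---non-degeneracy gives $c_1\,\Vol(N)\le\bigl|\int_N\omega\bigr|\le c_2\,\Vol(N)$ on unstable pieces, and closedness lets one trade $\int_{f^nD}\omega$ for $\int_D(f^n)^\ast\omega$---but stops precisely where the work begins. For the upper bound your fix~(i) does not close the gap: the Green-operator primitive $\beta_n$ is controlled only by $\|(f^n)^\ast\omega\|$ in a $C^0$ or Sobolev norm, and that norm grows like the product of the top $u$ singular values of $Df^n$, which in general strictly exceeds $sp(f_{\ast,u})^n$; the boundary term $\int_{\partial D}\beta_n$ then dominates the harmonic main term rather than being absorbed by it. Fix~(ii) points in a more promising direction, but as written the transversal averaging still leaves the boundary of the foliation box, which carries the same uncontrolled contribution. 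For the lower bound, ``decomposing $\sigma$ into unstable plaques plus lower-dimensional transversal corrections'' hides the entire difficulty: an arbitrary singular $u$-cycle has no reason to be homologous to a sum of unstable plaques of controlled total volume, and making this precise is essentially the construction of a Ruelle--Sullivan--type current for the unstable lamination, which is a theorem in its own right. So both halves of your argument remain open at exactly the step that carries the content of Saghin's result.
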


The proof of Theorem~\ref{t:A} is as follows:

\begin{proof}[Proof {\em (}of Theorem{\em~\ref{t:A}}{\em )}]
Now we show that the unique measure of maximal entropy is hyperbolic for each
$f\in\PH^{0}_{A}(\Toro^{d})$. In fact, we will show that the sign of the
center Lyapunov exponents, for both, $f$ and $A$, are the same. We can assume that for $A$, the Anosov diffeomorphism, the center direction is expanding; otherwise, we would use the inverse map.

For $f\in\PH^{0}_{A}(\Toro^{d})$, we consider $\mu_{f}$ the unique measure of
maximal entropy. By Theorem~\ref{t:HSX} we have:
\[h_{top}(A)=h_{top}(f)=h_{\mu_{f}}(f)\leq\lambda^{c}_{f}+\chi_{u}(f).\]
Because $\PH^{0}_{A}(\Toro^{d})\ni A$, by Proposition~\ref{p:OpenClose} and
Theorem~\ref{t:Saghin} we also have $$\chi_{u}(f)=\chi_{u}(A),$$ and since
$$h_{top}(A)=\chi_{u}(A)+\lambda^{c}(A),$$ we conclude that
$\lambda^{c}_{A}\leq \lambda^{c}_{f}$. Now, if $f$ is $C^{1}$ the existence
of hyperbolic sets with the required property follow from Katok's result~\cite{Katok80, Gel14}.
\end{proof}

\begin{rem}
Far from good regularity, i.e. $C^{1+\alpha}$ setting, the
Pesin theory fails dramatically~\cite{BCShi13}. However,
even though it is not known if Katok's theorem fails in $C^{1}$ regularity,
$C^{1+\alpha}$ assumption can be relaxed (just requiring the weaker $C^{1}$
differentiability hypothesis) when the domination condition is added~\cite{Gel14}.
\end{rem}

\section{Proof of Theorem B}

In order to prove Theorem B we appeal to a source of many examples, namely the skew-products. We start from a product of two linear Anosov system and locally modify the dynamics of one of the coordinate. The modification will have torus leafs as center fibers, on one of them the dynamics remains hyperbolic which is sufficient to achieve the required properties.

\medskip
The next two proposition are required.
\begin{prop}\label{p:HighEntropy}
    Let $A$ be a hyperbolic linear automorphism of the $2$-torus. For every $K>0$
    there exists, $f\colon \Toro^{2}\to\Toro^{2}$, a diffeomorphism isotopic to
    $A$ such that $h_{top}(f)>h_{top}(A)+K$.
\end{prop}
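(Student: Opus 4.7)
The plan is to realize $f$ as a composition $\phi\circ A$, where $\phi\colon\Toro^{2}\to\Toro^{2}$ is a smooth diffeomorphism compactly supported in a small disk and hence isotopic to the identity. The map $\phi$ will be designed so that $f=\phi\circ A$ exhibits a Smale horseshoe on $n$ symbols, and $n$ will be chosen so that $\log n>h_{top}(A)+K$.

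For the geometric setup, I would fix a hyperbolic fixed point of $A$ (taken as the origin) and work in linear coordinates in which $A=\mathrm{diag}(\lambda,\lambda^{-1})$ with $\lambda>1$. In these coordinates one selects a small square $Q=[-\eps,\eps]^{2}$; then $R:=A(Q)=[-\lambda\eps,\lambda\eps]\times[-\lambda^{-1}\eps,\lambda^{-1}\eps]$ is a long thin horizontal rectangle. Next one embeds an open disk $D\subset\Toro^{2}$ containing $Q\cup R$ with enough ambient room to perform a smooth folding inside $D$.

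Given $K>0$, pick $n\in\Nat$ with $\log n>h_{top}(A)+K$ and construct a $C^{\infty}$ diffeomorphism $\phi$ with $\mathrm{supp}(\phi)\subset D$ that bends $R$ into an $n$-fold zig-zag, arranged so that $\phi(R)\cap Q$ is the disjoint union of $n$ full-width horizontal strips and so that $\phi$ preserves suitable horizontal and vertical cones in a neighborhood of each such strip. Such a $\phi$ is a smooth version of the classical Smale folding and can be joined to $\mathrm{id}_{\Toro^{2}}$ by an explicit isotopy relative to $\partial D$; setting $f:=\phi\circ A$ then gives $f\simeq A$ as diffeomorphisms of $\Toro^{2}$.

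By construction, $f(Q)\cap Q$ consists of $n$ horizontal strips whose preimages in $Q$ form $n$ disjoint vertical strips, and combining this with the horizontal expansion and vertical contraction inherited from $A$, the usual cone criterion will produce an $f$-invariant hyperbolic Cantor set $\Lambda=\bigcap_{k\in\Int}f^{k}(Q)$ on which $f$ is topologically conjugate to the full shift on $n$ symbols; hence $h_{top}(f)\geq h_{top}(f|_{\Lambda})=\log n>h_{top}(A)+K$. The only delicate point will be to guarantee that $\phi$ is a genuine $C^{\infty}$ diffeomorphism whose derivatives keep the stable/unstable cones sufficiently close to horizontal/vertical throughout $D$; this is the familiar careful piece of the Smale horseshoe construction and amounts to performing the folds slowly and smoothly, while everything else is routine.
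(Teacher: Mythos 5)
Your proposal is correct and matches what the paper intends: the paper states this proposition without proof (treating it as the standard fact that one can raise entropy arbitrarily within an isotopy class), and the remark immediately following it presupposes exactly the robust hyperbolic horseshoe $H$ with $h_{top}(f|_H)>h_{top}(A)+K$ that your construction of $f=\phi\circ A$ produces. The one point to keep in mind when writing out the details is that to obtain $n$ full crossings of $Q$ the folding map $\phi$ must also stretch $R$ along its length (a single application of $A$ only lengthens $Q$ by the factor $\lambda$), but this extra expansion only reinforces the cone estimates, so the argument goes through as you describe.
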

\begin{rem}
    Notice that, because hyperbolic sets persist under small perturbations, if $g$
    is $C^1$-close to $f$, there exists a hyperbolic invariant set $H_g$ close to
    $H$ such that $g|_{H_g}\colon H_h\to H_g$ and $f|_{H}\colon H\to H$ are
    topologically conjugates. Thus,
    $$h_{top}(g)\geq h_{top}(g|_{H_g})=h_{top}(f|_H)>h_{top}(A)+K.$$
\end{rem}

The following proposition tells us that, for hyperbolic linear maps, the
entropy `does   not look affected' when removing a fixed point of its domain,
i.e,

\begin{prop}\label{p:EntropyFixed}
    Let $B$ be a hyperbolic linear automorphism of the $2$-torus. Let $p\in\Toro^{2}$
    be a fixed point of $B$. For every $\eps>0$, there exists
    $\delta>0$, such that $h_{top}(B|_{\Lambda_{\delta}})>h_{top}(B)-\eps/2$ where
    $\Lambda_{\delta}$ the maximal invariant set contained in $\Toro^{2}\setminus
    B_{\delta}(p)$.
\end{prop}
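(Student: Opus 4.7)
The plan is to exploit the symbolic dynamics of $B$ via refined Markov partitions: once the partition scale is smaller than $\delta$, removing $B_\delta(p)$ corresponds to forbidding only a uniformly bounded number of symbols in the associated subshift of finite type, and the resulting entropy drop becomes arbitrarily small as the partition is refined further.

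To set this up, I would fix a Markov partition $\mathcal{R}_0$ for $B$, with associated semiconjugacy $\pi_0 \colon \Sigma_{A_0} \to \Toro^2$ onto an irreducible SFT of entropy $\log \rho(A_0) = h_{top}(B)$. For $N\geq 1$, the refinement $\mathcal{R}_N = \bigvee_{j=-N}^{N} B^{-j}\mathcal{R}_0$ is still a Markov partition, with rectangles of diameter at most $C\lambda^{-N}$ (where $\lambda > 1$ is the expanding eigenvalue of $B$), and its transition matrix $A_N$ is the higher-block presentation of $A_0$, so still has spectral radius $e^{h_{top}(B)}$. Given $\eps > 0$, I would choose $N$ large and set $\delta$ equal to a small fixed multiple of $\lambda^{-N}$; then $B_\delta(p)$ is covered by at most $M$ rectangles of $\mathcal{R}_N$ (those meeting the fixed point $p$), where $M = O(1)$ is independent of $N$. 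Let $F$ be the set of indices of these rectangles. The submatrix $A'_N$ obtained by deleting from $A_N$ the rows and columns indexed by $F$ presents the SFT of itineraries that never visit any rectangle in $F$; projecting through $\pi_N$, such itineraries yield orbits contained in $\Toro^2 \setminus \bigcup_{i \in F} R_i \subset \Toro^2 \setminus B_\delta(p)$, hence in $\Lambda_\delta$. Consequently $h_{top}(B|_{\Lambda_\delta}) \geq \log \rho(A'_N)$.

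The main step, and the main obstacle, is to show $\log \rho(A'_N) \to h_{top}(B)$ as $N \to \infty$. Since $A_0$ is primitive, its Parry measure $\mu_P$ assigns weight of order $e^{-(2N+1)h_{top}(B)}$ to each cylinder of length $2N+1$, whence $\mu_P(F) = O(e^{-2Nh_{top}(B)})$. A standard escape-rate estimate for mixing SFTs (which can be extracted from Lind--Marcus's treatment of subshifts of finite type, or from the hyperbolic-repeller theory developed by Chernov--Markarian and Demers--Young) then gives $\log \rho(A_N) - \log \rho(A'_N) = O(\mu_P(F)) = O(e^{-2Nh_{top}(B)})$. Choosing $N$ large enough that this difference is strictly less than $\eps/2$, and taking $\delta$ accordingly, yields the required strict inequality $h_{top}(B|_{\Lambda_\delta}) > h_{top}(B) - \eps/2$.
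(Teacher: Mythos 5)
The paper offers no proof of Proposition~\ref{p:EntropyFixed} at all --- it is stated and then used directly in the proof of Theorem~B --- so there is no argument of the author's to compare yours against; your proposal has to stand on its own. Its overall strategy (Markov partition, refine to scale $\delta$, delete the boundedly many symbols meeting the hole, show the Perron value of the pruned matrix converges to that of the full one) is the standard route to this fact and is essentially correct. Two small points need tightening: the set $F$ must consist of the rectangles meeting $B_\delta(p)$, not merely those containing $p$, and the bound $M=O(1)$ uses that the rectangles of $\mathcal{R}_N$ have diameter bounded \emph{below} as well as above by a constant times $\lambda^{-N}$ (true here because $B$ is linear with determinant $\pm 1$, so the refinement contracts each rectangle of $\mathcal{R}_0$ by exactly $\lambda^{-N}$ in each direction); also, to conclude $h_{top}(B|_{\Lambda_\delta})\geq\log\rho(A'_N)$ you should say that $\pi_N$ is bounded-to-one on the survivor subshift, so the factor map does not lose entropy.

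The real weight of the argument sits entirely on the claim $\log\rho(A_N)-\log\rho(A'_N)=O(\mu_P(F))$ with a constant uniform in $N$, and as written this is asserted rather than proved; your pointers to Lind--Marcus and to open-systems escape-rate theory are in the right neighborhood but do not pin down the statement you actually need. The precise tool is Lind's perturbation theorem for shifts of finite type (D.~Lind, \emph{Perturbations of shifts of finite type}, SIAM J.\ Discrete Math.\ 2 (1989)): for a fixed mixing SFT there are constants $c_1,c_2$ with $c_1\mu(w)\leq h(\Sigma)-h(\Sigma_{\setminus w})\leq c_2\mu(w)$ for every forbidden word $w$. To get uniformity in $N$ you should recode: deleting $M$ states of $A_N$ is the same as forbidding $M$ words of length $2N+1$ in the \emph{fixed} SFT $\Sigma_{A_0}$, so the constants never change; a naive union bound does not suffice here (for large word length $m$ the count of occurrences overwhelms $\rho^m$), which is exactly why the step is a theorem and not a triviality. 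Note also that the full strength $O(\mu_P(F))$ is more than you need --- all that is required is that the entropy drop tends to $0$ as $N\to\infty$, which can be obtained more elementarily by a specification/concatenation argument producing $\rho^{m(1-o(1))}$ admissible words of length $m$ avoiding the forbidden blocks. With the key lemma correctly attributed and the uniformity observed, the proof is complete.
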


In the next proposition we establish an invariance property of cones. To do
so, assume that $L\colon\Real^{4}\to\Real^{4}$ is a linear map written as a
matrix
$$
   L=
      \left(
        \begin{array}{cc}
          B_N & X \\
          0 & Y \\
        \end{array}
      \right)
$$
of linear maps \, $\Real^{2}\to\Real^{2}$. Where $Y$ is invertible and
$$
   B_N=
      \left(
        \begin{array}{cc}
          \lambda^{N} & 0 \\
          0 & \lambda^{-N} \\
        \end{array}
      \right),
    \ \ \ \ \Real\ni\lambda>1.
$$

Now consider $E_1=\Real e_1$, a linear subspace generated by $e_1=(1,0,0,0)$
and a cone
$$ \C^{u}(E_1,\gamma_1, \gamma_2)=\set{\ve\in\Real^{4}\colon
\abs{\ve_2}\leq\gamma_2\abs{\ve_1},\ \ \ \abs{\ve_3},
\abs{\ve_4}\leq\gamma_1\abs{\ve_1}}.$$ Notice that $E_1$ is invariant by $L$.
Similarly, we define a cone $\C^{s}(E_2,\gamma_1, \gamma_2)$, where
$e_2=(0,1,0,0)$ and $E_2=\Real e_2$ is also $L$-invariant. The basic fact we
want is,

\begin{prop}\label{p:InvarianceCones}
Let $N$ be a positive integer such that $(\norm{Y}+1)<100^{-1}\lambda^{N}$
and $\norm{Y^{-1}}<100^{-1}\lambda^{N}$. Consider $0<K\in\Real$ so that
$\norm{X}<K$. Then
\begin{enumerate}
  \item There exist $\gamma_1, \gamma_2>0$, so that
      $\C^{u}(E_1,\gamma_1, \gamma_2)$ becomes a $L$-invariant cone.
  \item There exists $\lambda>1$ such that for all $\ve\in\C^{u}$,
      $\norm{L\ve}\geq\lambda\norm{\ve}$.
\end{enumerate}
\end{prop}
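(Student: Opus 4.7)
The plan is to split $\Real^{4}=\Real^{2}\oplus\Real^{2}$ along the block decomposition of $L$. Writing $\ve=(\ve',\ve'')$ with $\ve'=(\ve_1,\ve_2)$ and $\ve''=(\ve_3,\ve_4)$, one has $L\ve=(B_N\ve'+X\ve'',\,Y\ve'')$. For $\ve\in\C^{u}(E_1,\gamma_1,\gamma_2)$ the defining inequalities give $\abs{\ve_2}\leq\gamma_2\abs{\ve_1}$ and $\norm{\ve''}\leq\sqrt{2}\,\gamma_1\abs{\ve_1}$, and combined with $\norm{X}<K$ they yield the three controlling estimates
\begin{align*}
\abs{(L\ve)_1}&\geq(\lambda^{N}-\sqrt{2}\,K\gamma_1)\abs{\ve_1},\\
\abs{(L\ve)_2}&\leq(\lambda^{-N}\gamma_2+\sqrt{2}\,K\gamma_1)\abs{\ve_1},\\
\abs{(L\ve)_j}&\leq\sqrt{2}\,\norm{Y}\gamma_1\abs{\ve_1}\qquad(j=3,4).
\end{align*}
These are the only nontrivial computations that the proof relies on; everything else is a direct consequence.

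Next I would reduce cone invariance to two scalar inequalities in $\gamma_1,\gamma_2$. The requirement $\abs{(L\ve)_j}\leq\gamma_1\abs{(L\ve)_1}$ for $j=3,4$ becomes, after cancelling $\gamma_1\abs{\ve_1}$, the condition $\sqrt{2}\norm{Y}\leq\lambda^{N}-\sqrt{2}K\gamma_1$, which the hypothesis $\norm{Y}+1<\lambda^{N}/100$ renders true for any sufficiently small $\gamma_1>0$. Fixing such a $\gamma_1$, the remaining requirement $\abs{(L\ve)_2}\leq\gamma_2\abs{(L\ve)_1}$ rearranges to $\sqrt{2}K\gamma_1\leq\gamma_2\,(\lambda^{N}-\sqrt{2}K\gamma_1-\lambda^{-N})$; since the parenthesis is positive and of order $\lambda^{N}$, any $\gamma_2$ of order $K\gamma_1\lambda^{-N}$ works. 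This settles part~(1).

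For part~(2), the same lower bound on $\abs{(L\ve)_1}$ is already enough. Using $\norm{\ve}\leq\abs{\ve_1}\sqrt{1+\gamma_2^{2}+2\gamma_1^{2}}$, I get
\[
\norm{L\ve}\;\geq\;\abs{(L\ve)_1}\;\geq\;(\lambda^{N}-\sqrt{2}K\gamma_1)\abs{\ve_1}\;\geq\;\frac{\lambda^{N}-\sqrt{2}K\gamma_1}{\sqrt{1+\gamma_2^{2}+2\gamma_1^{2}}}\,\norm{\ve},
\]
and because the hypothesis forces $\lambda^{N}>100$ while $\gamma_1,\gamma_2$ have been chosen small, the right-hand ratio is bounded below by a uniform constant $\mu>1$ (in fact by $\lambda^{N}/2$). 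There is no genuine obstacle here; the argument is purely a bookkeeping of norms. The only point that requires care is the order in which the parameters are picked — $\gamma_1$ must be fixed first in terms of $\norm{Y}$ and $K$, and $\gamma_2$ only afterwards in terms of $\gamma_1$ — and the quantitative hypothesis $\norm{Y}+1<\lambda^{N}/100$ is used precisely to guarantee that the denominator $\lambda^{N}-\sqrt{2}K\gamma_1-\lambda^{-N}$ stays comfortably positive.
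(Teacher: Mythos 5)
Your argument is correct and complete. The paper states Proposition~\ref{p:InvarianceCones} without proof, so there is nothing to compare against; your computation is exactly the standard cone-invariance estimate one would supply here, with the parameters chosen in the right order ($\gamma_1$ first, then $\gamma_2$), and the only remark worth adding is that the hypothesis $\norm{Y^{-1}}<100^{-1}\lambda^{N}$ plays no role in the stated conclusions about $\C^{u}$ --- it would be used only for the analogous invariance/expansion of the stable cone $\C^{s}(E_2,\gamma_1,\gamma_2)$ under $L^{-1}$.
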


\begin{proof}[Proof {\em (}of Theorem {\em B)}]
We start by considering a hyperbolic linear automorphism of the $2$-torus
$A$. Therefore by Proposition~\ref{p:HighEntropy} we have
$f_t\colon\Toro^{2}\to\Toro^{2}$ an isotopy between $f_0=A$ and $f_1=f$,
where $h_{top}(f)>h_{top}(A)+\varepsilon$. Notice that we can think of $f_t$
as being in fact a diffeotopy.

Let $\lambda>1$ be an eigenvalue corresponding to a linear hyperbolic
automorphism, call it $B$, which has a fixed point $p\in\Toro^{2}$. Consider
also $N>0$ big enough in such a way $(\|D_yf_t(y)\|+1)<100^{-1}\lambda^{N}$.
Subsequently, take $\delta>0$, given by Proposition~\ref{p:EntropyFixed}, so
that we have
$$h_{top}(B^{N}|_{\Lambda_{\delta}})>h_{top}(B^{N})-\frac{\eps}{2},\ \
\text{where} \ \ \ \Lambda_{\delta}=\Toro^{2}\setminus B_{\delta}(p).$$
Finally, we define
$F\colon\Toro^{2}\times\Toro^{2}\to\Toro^{2}\times\Toro^{2}$ by
$$F(x,y)=(B^{N}x, f_{k(x)}y), \ \ \ \ \Toro^{2}\ni x, y,$$
where the smooth function \, $k\colon\Toro^{2}\to\Real$ \, satisfies \,
$k(x)=0$ \, if \, $\|x-p\|>\delta$, \, and \, $k(x)=1$ \, if\,
$\|x-p\|\leq\delta/2$.\\
The derivative of $F$ is the matrix
    $$
      DF(x,y)=
             \left(
               \begin{array}{cc}
                  B^{N} & D_{x}(f_{_{k(x)}}y)\\
                  0 & D_{y}(f_{_{k(x)}}y) \\
               \end{array}
             \right),
    $$
and we have that $F$ is a diffeomorphism and meets the requirements of
Proposition~\ref{p:InvarianceCones}.

Hereafter, we consider an open set $\U\subset \Diff^{1}(\Toro^{4})$ such
that $\U\ni F$, any $G\in\U$ is partially hyperbolic and such
that $G$ is isotopic to $F$. Remember that partial hyperbolicity is a $C^{1}$
open condition, and we know that if $G$ is $C^{0}$-close to $F$ then $G$ and
$F$ are homotopic. First of all, $F$ is indeed homotopic to the linear
hyperbolic automorphism $(B^{N},A)$ for which we know that $h_H(F)=\log
sp(F_{\ast,2})$. Also, we have
\begin{equation*}
  \begin{split}
      h_{top}(F) &\geq h_{top}(B^{N}|_{\Lambda_{\delta}})+h_{top}(f) \\
      & > h_{top}(B^{N})-\frac{\eps}{2}+h_{top}(A)+\eps \\
      & = h_{top}(B^{N},A)+\frac{\eps}{2}.
  \end{split}
\end{equation*}
So, the third item of theorem is proved.

Now, we consider $G\in \U$. Let us denote with \,
$\lambda^{-}_{N}<0<\lambda^{+}_{N}$ \, the Lyapunov exponents of \, $B^{N}$,
and by \, $\lambda^{-}_{A}<0<\lambda^{+}_{A}$ \, associated to $A$.
Let $\mu$ be an ergodic $G$-invariant measure with $h_{\mu}(G)>h_H(G)$, and
for this measure we have the exponents \, $\lambda^{-}_{N}, \lambda^{+}_{N},
\gamma_1, \gamma_2\in\Real$. Now, by Ruelle's inequality it follows that
$$\lambda^{+}_{A}+\lambda^{+}_{N}=h_H(G)<h_{\mu}(G)\leq
\lambda^{+}_{N}+\max\{0,\gamma_1\}+\max\{0,\gamma_2\}.$$
This implies that $\mu$ is actually a
hyperbolic measure with index being same to its linear part. Thus, the second
requirement is established.

Let us now prove that if we adjust the initial set $\U$, then   any
$G\in\U$ is topologically transitive. We consider the following
$F$-invariant laminations:
\begin{itemize}
\item Lamination by leaves homeomorphic to $\Real\times\Toro^{2}$:
    $$\F^{s}=\set{\set{L^{s}_{x}}_{\Toro^{2}\ni x}\ | \ \
    L^{s}_{x}=W^{s}(x,B^{N})\times\Toro^{2}},$$ analogously  we have the
    $\F^{u}$-lamination.
\item Lamination by torus $\Toro^{2}$:
    $$\G=\set{\set{T_x}_{\Toro^{2}\ni x}\ | \ \
    T_x=\{x\}\times\Toro^{2}}.$$
\end{itemize}
These lamination are normally hyperbolic and $C^{1}$; therefore, they are plaque
expansive for $F$, and as such we have $C^{1}$-persistence of such a
laminations.

Robust transitivity is due to these properties, as well as because the
dynamic induced on the space of the leaves remains the same. Namely, consider
$\U\ni F$ the initial $C^{1}$-neighborhood and reduces it in such a way
the above laminations persist. So for any $G\in\U$ we have the following
properties,
\begin{enumerate}
    \item $G|_{\Lambda}$ is transitive on the torus $\Lambda =
        \h_{G}(\{p\}\times \Toro^{2}) \subset \Toro^{4}$,
    \item  $\bigcup_{\Lambda\ni z} W^{ss}_{G}(z)$ is dense on
        $\Toro^{4}$.
\end{enumerate}
In order to establish ($2$), by using $\h^{s}_{G}$, one can prove that:
\[\bigcup_{\Toro^{2}\ni z}W^{ss}_{F}(p,z)=W^{ss}(p,B^{N})\times\Toro^{2},\]

and so, the item $(2)$ of the theorem is proved. Note that (1) is easy to see because the restriction of $F$
to $\{p\}\times\Toro^{2}$ is equal to $A$, so by structural stability of $A$
the same holds for $G$.

To see the transitive property, take \, $U, V\subset\Toro^{4}$ \, open sets.
Take $(p, q)\in\Toro^{4}$ a periodic point of $G$, $G^{k}(p,q)=(p,q)$,
such that for some $0\leq\ell<k$, \[W^{ss}(p,q)\cap U\neq\emptyset
\,\,\,\,\,\text{and} \,\,\,\, W^{uu}(G^{\ell}(p,q))\cap V\neq\emptyset\]
Take $D\subset U$ a disk transverse to $W^{ss}(p,q)$, so
by $\lambda$-lemma, $G^{nk}(D)$ converges in compact parts, thus there
exists $n>0$ such that $G^{nk+\ell}(U)\cap V\neq\emptyset$.
\end{proof}


\section{Proof of Theorem C}

The example constructed here will be derived from an Anosov diffeomorphism. We starts with a indecomposable linear hyperbolic automorphism, we locally modify the central fiber to get enough entropy while maintaining control over the domination of bundles.

\medskip

We start by considering $f_A\colon\Toro^{4}\to\Toro^{4}$, a linear Anosov
automorphism induced in $\Toro^{4}$ by a linear map $A\colon \Real^{4} \to
\Real^{4}$ with eigenvalues $\lambda_4>1>\lambda_3>\lambda_2>\lambda_1>0$.

Let \, $T\Toro^{4} = E_{1}\oplus E_{2}\oplus E_{3}\oplus E_{4}$ \, be the
splitting associated to the eigenvalues. Take a fixed point $\Toro^{4}\ni
x_0$, a neighborhood $U\ni x_0$ and $\varphi\colon U\subset \Toro^{4}\to
D_1\subset \Real^{4}$, a chart diffeomorphism where
$D_r=\set{x\in\Real^{4}\,\,|\,\,\, \norm{x}<r} $, satisfying \[ \varphi\circ
f_A\circ \varphi^{-1}(x)=(\lambda_1 x_1,\lambda_2 x_2,\lambda_3 x_3,
\lambda_4 x_4)\] for all $x\in D_{r_0}$, where $r_0$ is so small that
$f_A\circ\varphi^{-1}(D_{r_0})\subset D_1$.\\
Take $U_0=\varphi^{-1}(D_{r_0/2})$.  We can suppose that $U\subset
B_{\delta_0}(x_0)$, where $\delta_0$ is small enough so that two different
lifts of $U_0$ are at a distance of at least $5000\delta_0$.

Because $f_A\colon\Toro^{4}\to\Toro^{4}$ is an Anosov diffeomorphism, we have
that $A$ is topologically stable in the strong sense, as proved by Walters~\cite{Wa70}.
So given $0<\eps\ll\delta_0$, there exists
$0<\delta<\delta_0$ small with the property that any diffeomorphism $f$ with
$\delta$-$C^{0}$-distance to $A$ is semiconjugated to $A$. The semiconjugacy
map $h$ is at $\eps$-$C^{0}$-distance from the identity map.

We shall modify $f_A$  inside a suitable ball such that we get a new
diffeomorphism $f_0\colon\Toro^{4}\to\Toro^{4}$. In fact, we choose $r_0$
small enough so that $U_0\subset B_{\delta/2}(x_0)$. Finally, we define the
map $f_0$ by:
\[f_0(x) = \Bigg\{
\begin{matrix} \ \ \ \ \ \ f_A(x)  \ \ \ \ \ \ \ \ \ \text{when} \ \ x\notin U_0\\
\varphi^{-1}\circ F \circ \varphi(x) \ \ \ \text{when} \ \ x\in U_0
\end{matrix}\Bigg.\]
where, if $x=(x_1,x_2), \, y=(y_1, y_2)\in \Real^{2}$, $F$ is written as
$$F(x_1,y,x_2)=(\phi(x), h(x,y), k(x)),$$
and the involved functions $\phi,\ \ h,\ \ k$, will be specified below.

What we want to do is to construct an open set \, $\U\subset
\Diff^{1}(\Toro^{4})$, $\U\ni f_0$, such that any $f\in \U$ is required to
verify the following properties:
\begin{enumerate}
    \item  $f$ is dynamical coherent.
    \item Let $\C^{u}$ the unstable cone field around the
        subspace $E_4$ which is preserved by $A$. Then, there exist
        $N\in\Nat$ and $\lambda>1$ such that $D_xf^{N}(\C^{u}(x))\subset$
        int $(\C^{u}(f^{N}x))$ and for every $\ve\in\C^{u}(x)\setminus
        \set{0}$ we have $\norm{D_xf^{N}\ve}>\lambda\norm{\ve}$.
  \item There exists a continuous and surjective map,
      $h_f$, \, $h_f\circ f=f_A\circ h_f$. The
      semiconjugacy map $h_f$, is 1-1 restricted to the unstable manifold
      $W^{u}_f$, and sends $cs$-leaves of $f$ on $s$-leaves of $A$.
 \end{enumerate}
We shall construct $f_0$, making sure that it verifies the desired properties
(1), (2), and (3), and such that by perturbing $f_0$, the properties still
remain.

First of all, we proceed detailing the construction of $f_0$; to this, we
locally modify the linear map along the central direction $E_2\oplus E_3$.
The resulting application will be denoted by $h$. The entire construction of
$h$ is summarized by saying `pasting a horseshoe'. So, let $g\colon
B_{r}\subset \Real^{2}\to B_{r}$ be a map such that

\qquad \qquad \qquad \qquad $\Omega(g)=\{0\}+H$, where $H\subset B_{\ell}$
for $\ell\leq \frac{r}{3}$,

\qquad \qquad \qquad \qquad $g(0)=0$ \ \ \ \ is an attracting fixed point,

\qquad \qquad \qquad \qquad $g(H)=H$ \ \ is conjugate to a $n$-shift.\\
Here, $B_r=\set{x\in\Real^{2}\,\,|\,\,\, \norm{x}<r}$ and $n\in\Nat$ is taken
so that $\log n>\log \lambda_1$. We now consider an isotopy $g_t\colon B_r\to
B_r$ between the maps $g_0(x_2,x_3)=(\lambda_2 x_2, \lambda_3 x_3)$ and
$g_1=g$ such that $g_t=g_0$ for all $t\in [0,1]$ and $x\notin B_{r/2}$. It is
also considered a bump-function $\chi\colon \Real\to\ [0,1]$ such that
$\chi(t) = 0$ if $\abs{t}\geq \delta_2$, $\chi(t) = 1$ if $\abs{t}\geq
\delta_1$, and $\chi(-t)=\chi(t)$ for all $\Real\ni t$. We define:
$$h(x,y)=g_{\chi(\norm{x})}(y).$$

Observe that from $g$, it is possible to construct a function $\widetilde{g}$
so that the topological entropy does not change while maintaining control on
the growth of its derivative. To do so, just take a homothety and call it
$\zeta$, and it is sufficient to consider the map $\zeta\circ g\circ
\zeta^{-1}$. The map is well glued in the complement of a suitable ball, a
region where $g$ is a linear map.

The second modification is done in the direction corresponding to the bundle
$E_{1}\oplus E_{4}$. With this, we attempt to maintain the
dominance of the bundles. We consider a linear map $T\colon
\Real^{2}\to\Real^{2}$, defined by $T(x,y)=(\lambda_1 x, \lambda_4 y)$, and
real numbers $\hat{\lambda}_1<\lambda_1,\ \hat{\lambda}_4>\lambda_4$. The
modification will lead to the maps $\phi$ and $k$, so that the application
$(\phi (x,y),k(x,y))$ has strong $(\hat{\lambda}_1,
\hat{\lambda}_4)$-contraction/expansion in a neighborhood of the origin, and
so that outside a larger ball the map becomes equal to the linear map
$T(x,y)$.

It can adapt $R(a_i, b_i)$ so that it remains in a domain as small as
required, also ensuring that $\hat{\lambda}_1, \hat{\lambda}_4$ are chosen so
that they meet the estimate: $\hat{\lambda}_1<K^{-1}$, $\hat{\lambda}_4>K$,
where $K=\norm{Dg}$.

To ensure that $f_0$, constructed as such, becomes partially hyperbolic, we
can modify $f_0$ so that it verifies a cone-criterium. For this, consider
$\eps_0>0$ and a ball $D_{\eps_0}\subset\Real^{4}$. So, there exists
$N_{\eps_0}\in\Nat$ such that
$$x\in D_{\eps_0},\ \ Fx\notin D_{\eps_0}\ \ \Longrightarrow \ \ F^{k}x\notin
D_{\eps_0}, \ \ \text{for all}\ \ 1\leq k\leq N_{\eps_0}.$$ We have
$N_{\eps_0}\to\infty$ as $\eps_0\to 0$.

Consider $\alpha>0\in\Real$ such that for all $x$,
$\angle(D_xFe_4,e_4^{\bot})\geq\alpha$ where $e_j\in\Real^{4}$ are the
canonical vectors, and $e_4^{\bot}=\langle e_1,e_2,e_3\rangle$. Let
$\theta>0$ be the cone angle, selected small enough such that
$$\angle(D_xF\ve,e_4^{\bot})\geq\frac{\alpha}{2}, \ \ \text{for all}\ \
\ve\in \mathcal{C}^{u}_{\theta}(E_4(x)).$$

Let $N\in\Nat$ big enough so that $$\angle(\mathcal{C}^{u}(E_4),
e_4^{\bot})\geq\alpha\ \ \Longrightarrow\ \ \varphi\circ f^{N}_A\circ
\varphi^{-1}\mathcal{C}^{u}(E_4)\subset \mathcal{C}_{\theta}^{u}(E_4).$$

Now, take $\eps_0>0$ small such that $N_{\eps_0}>2N$, and take an appropriate
homothetic transformation $\zeta\colon B_r\to B_{\eps_0}$. We can consider
the new map $f_0$, by considering the conjugation $\zeta\circ
F\circ\zeta^{-1}$. Of course, the condition
$\angle(D_xFe_4,e_4^{\bot})\geq\alpha$ remains for the new map.

The second condition required in the theorem is resolved in a way similar to
the previous theorem.

We need to observe that the map $f_0$ is in fact dynamically coherent. This
follows as an immediate consequence of ~\cite{FPS13}, because by construction
$f_0$ verifies the $SADC$ and $properness$ conditions (see~\cite{FPS13} for
formal definitions) which are sufficient to achieve the integrability of
$cs,cu$ bundles. It is also proved that such conditions remain valid when a
small perturbation is added.

The properties (2) and (3) are persistent in a $C^{1}$-neighborhood of
$f_0$. We just need to check that the semiconjugacy\, $h_f$\, is 1-1 when
restricted to unstable leaves; however, because the unstable manifold\,
$E^{u}_A$\, is one-dimensional, this is a consequence of a property that we
have by looking at its lift:
$\widetilde{h}_f(\widetilde{x})=\widetilde{h}_f(\widetilde{y})$\, if and only
if there exists $K>0$ such that
$\|\widetilde{f}^n(\widetilde{x})-\widetilde{f}^n(\widetilde{y})\|<K$ for
every $n\in\Int$.

Let us now prove that every $f\in\U$ is topologically transitive. We consider
two properties:
\renewcommand{\labelenumi}{\alph{enumi}$)$ }
\begin{enumerate}
  \item The existence of $L>0$ such that every unstable arc with length bigger
than $L$ intersects every $cs$-disc with internal radius $5\delta$.
  \item By the classical Ma\~{n}\'{e}-Bonatti-Viana argument the backward iterates of any $cs$-disc will contain a disc of radius bigger than
$5\delta$.
\end{enumerate}

So, the transitive property is immediate, in fact, suppose that $U$, $V$ are
open sets of $\Toro^{4}$. On one hand, we have that a forward iterated of
$U$, $f^{m_0}(U)$, contains an arc-segment of length larger than $L$, and on
the other hand there exists $n_0$ such that $f^{-n_0}(V)$ will contain a
$cs$-disc of radius bigger that $5\delta$. Thus, $f^{m_0+n_0}(U)\cap
V\neq\emptyset$.

As in~\cite[p.~190]{BV00}, condition (a)  will be satisfied if one
chooses sufficiently narrow cone fields. By the lemma below, condition (b)
also holds.

\begin{lem}\label{p:ManheBonattiViana}
    Consider $f\in\U$ and denote by $W^{cs}_{loc}(f)$ an arbitrary $cs$-disc.
    Then there exists $n_0\in\Nat$ such that $f^{-n_0}(W^{cs}_{loc}(f))$ has
    $cs$-radius larger than $5\delta$.
\end{lem}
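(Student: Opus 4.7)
The plan is to adapt the classical Ma\~n\'e--Bonatti--Viana mechanism for the backward growth of $cs$-discs, using the semiconjugacy $h_f$ provided by property $(3)$ of the construction as the engine that forces expansion of $W^{cs}_{loc}(f)$ under $f^{-n}$.

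First I would pass to the universal covers. Recall that the semiconjugacy $h_f$ is $\eps$-$C^0$-close to the identity, satisfies $h_f\circ f = f_A\circ h_f$ and sends $cs$-leaves of $f$ into $s$-leaves of $A$. Lift to obtain $\tilde h_f\colon\Real^4\to\Real^4$ staying uniformly within distance $\eps$ of the identity, and lift $W^{cs}_{loc}(f)$ to a disc $\widetilde D\subset\Real^4$. By property $(3)$, $S:=\tilde h_f(\widetilde D)$ is a connected subset of a single leaf of $\widetilde{W^s_A}$, hence a segment, and I would argue that its length $\ell>0$ is uniformly bounded below over $f\in\U$: the disc $\widetilde D$ contains a short stable arc tangent to $E^s_f$ (close to $E_1$) of some fixed size, and the $C^0$-closeness of $\tilde h_f$ to the identity prevents its image from collapsing to a point.

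Next I would use the intertwining identity $\tilde h_f\circ\tilde f^{-n}=\tilde A^{-n}\circ\tilde h_f$ to compute
\[
\tilde h_f\bigl(\tilde f^{-n}(\widetilde D)\bigr)=\tilde A^{-n}(S),
\]
which is a segment of $\widetilde{W^s_A}$ of Euclidean length at least $|\lambda_1|^{-n}\,\ell$, since $\tilde A^{-1}$ expands the $E_1$ direction by the factor $|\lambda_1|^{-1}>1$. Because $\tilde h_f$ is within $\eps$ of the identity, $\tilde f^{-n}(\widetilde D)$ lies in the $\eps$-tube around $\tilde A^{-n}(S)$ and meets every $\eps$-ball centered on its points, so it has Euclidean diameter at least $|\lambda_1|^{-n}\ell-2\eps$, which grows without bound in $n$.

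Finally, the main obstacle is converting this one-dimensional growth into a lower bound on the intrinsic $cs$-radius, i.e.~finding a genuine three-dimensional ball of intrinsic radius $5\delta$ inside $\tilde f^{-n_0}(\widetilde D)$. Here I would use three ingredients: (i) the disc is everywhere tangent to the narrow $cs$-cone constructed earlier, so intrinsic and Euclidean distances are comparable up to a fixed multiplicative constant; (ii) the dynamical coherence of $f$ (property $(1)$) provides a continuous center foliation $\W^c$ sub-foliating $\W^{cs}$, and the partial hyperbolicity estimates give only mild distortion of $Df^{-1}$ on $E^c$, so the center plaques sitting inside $\tilde f^{-n}(\widetilde D)$ keep a definite size $\eta>0$ independent of $n$ (the usual argument: a uniform-size center plaque in $\widetilde D$ is sent by $\tilde f^{-n}$ to a plaque whose length is controlled from below by $\|Df|_{E^c}\|^{-n}$, a loss that is beaten by the $|\lambda_1|^{-n}$-gain in the stable direction); (iii) a connected $cs$-submanifold tangent to a narrow cone, saturated by center plaques of size $\eta$ and stretched in the stable direction over a length $\geq|\lambda_1|^{-n}\ell-2\eps$, contains an intrinsic $cs$-ball of radius comparable to the minimum of these two quantities. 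Choosing $n_0$ so large that $|\lambda_1|^{-n_0}\ell-2\eps\geq C\cdot 5\delta$, with $C$ absorbing the cone-width and center-size constants, yields the required $cs$-radius.

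The hard part is Step~(iii): one must verify that the center plaques in $\tilde f^{-n}(\widetilde D)$ do not pinch or degenerate as $n$ grows. This is precisely where the specific features of the construction (narrow cones from the Proposition on invariance of cones, together with the $SADC$ and properness conditions invoked in the text) enter; the semiconjugacy alone only yields a stretched stable direction, and without the center foliation control the intrinsic radius could remain small even while the intrinsic diameter diverges.
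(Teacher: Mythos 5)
Your first half --- using the semiconjugacy $h_f$ and the strong stable arc inside the disc to force exponential growth of the \emph{diameter} of $f^{-n}(W^{cs}_{loc}(f))$ --- is exactly the paper's opening move, and it is fine. But the step you flag as ``the hard part'' is a genuine gap, and the mechanism you propose to close it is not the one that works. The paper does not try to control center plaques via dynamical coherence at all. Instead it inserts Ma\~{n}\'{e}'s escape argument: because the backward iterates of the disc grow in diameter while the perturbation region is a fixed small ball, there must exist a point $x_0$ in the disc and an $n_0\in\Nat$ such that $f^{-n}(x_0)$ avoids the perturbation region for \emph{all} $n\geq n_0$. Along such a backward orbit the map coincides with the linear Anosov $f_A$, whose inverse expands the entire three--dimensional bundle $E_1\oplus E_2\oplus E_3$ uniformly (all of $\lambda_1,\lambda_2,\lambda_3$ are less than $1$). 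Hence a small $cs$-neighborhood of $x_0$ inside the disc is expanded \emph{isotropically} under backward iteration until its intrinsic radius exceeds $5\delta$. This is the missing idea: the one--dimensional stretch you obtain from $h_f$ is used only as the trigger for the escape argument, not as the direct source of the $cs$-radius.

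Your substitute --- claiming the center plaques ``keep a definite size $\eta>0$ independent of $n$'' because the loss $\norm{Df|_{E^c}}^{-n}$ is ``beaten'' by the gain $\abs{\lambda_1}^{-n}$ --- does not hold up. Inside the perturbation region a horseshoe has been pasted into the central coordinates, so $Df^{-1}|_{E^c}$ genuinely contracts along some backward orbits (the pasted map $g$ has expanding directions, whose inverses contract); a center plaque whose backward orbit keeps returning to that region can shrink to arbitrarily small size, and being ``beaten'' by the stable gain only bounds the diameter from below, not the radius. A long, thin set of huge diameter and tiny width has small $cs$-radius, which is precisely the object the lemma must rule out. Without the escape argument (or some other device guaranteeing that a definite sub-disc eventually sees only the unperturbed hyperbolic dynamics), your outline does not produce the required $5\delta$ ball.
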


\begin{proof}
We consider an arbitrary $cs$-disc in $\Toro^4$. Since it contains a strong
stable manifold, their negative iterates reach a size larger than $\eps$.
This implies that $h_f$, the semiconjugacy map, cannot collapse the disc and
so the negative iterates of the disc grow exponentially in diameter. The
classical Ma\~{n}\'{e}'s argument~\cite{Man78} then implies that there exists a point
$x_0$ in the disc and $n_0\in\Nat$ such that $f^{-n}(x_0)$ does not belong to
the perturbation region for any $n\geq n_0$. Since outside the perturbation
region the dynamic is hyperbolic, one obtains that the negative iterates of
the disc eventually reach the size of $5\delta$.
\end{proof}

\section{Considerations}

{\bf 1.} It turns out that there exists an interesting
question, which we have yet to mention. Notice that in principle, if there is
$u_0\in\Nat$ such that $h_H(f)= \log sp(f_{\ast,u_0})$, this $u_0$ may not
necessarily  be unique. However, in the setting of Theorem $A$, e\-very
partially hyperbolic diffeomorphism considered there associates a unique
value $u_0$ which maximizes the homological entropy. This follows immediately
from the uniqueness of the maximal entropy measure. The general case could be different, and should be studied in a deep way.

What one would expect is that the existence of hyperbolic sets with `good
index' still continues to exist under certain conditions.

For instance, consider a partially hyperbolic diffeomorphism $A\times
Id\colon \Toro^{3}\hookleftarrow$, where $A$ is a linear hyperbolic map on
$\Toro^{3}$ and $Id$ the identity map on $\mathbb{S}^{1}$. Consider $f$ a
small perturbation of $A\times Id$. We know that $f$ is also partially
hyperbolic and has center fibers $W^{c}(x)$ homeomorphic to $\mathbb{S}^{1}$,
where the dynamic induced on space of leaves, which is $\Toro^{2}$, is the
same as the linear dynamic, $A$. We have that $h_H(f)=\log sp(f_{\ast,1})$
and also $h_H(f)=\log sp(f_{\ast,2})$. In~\cite{HHTU10} is proved that if $f$
has one maximizing hyperbolic measure with index 1, then there is at least
another maximizing hyperbolic measure with index 2. Actually, in the context
of dynamically coherence partially hyperbolic diffeomorphism with compact one
dimensional center leaves, they form an open and dense subset.

\medskip

{\bf 2.} Related to Theorem C, we ask if the following can be
proved: there is not an example as in Theorem C which is absolutely partially
hyperbolic.

\medskip

{\bf 3.} Consider $f\in\PH(\Toro^{3})$ with splitting
$E_f^{cs}\oplus E_f^{u}$ isotopic to a linear Anosov
$A\colon\Real^{3}\to\Real^{3}$ with $\dim E_{A}^{s}=2$. Let $\mu$ be an
ergodic invariant measure such that $h_{\mu}(f)>h(A)$. Where is such a
measure supported? In Theorem C, we saw that the support of such a measure is
on an invariant surface. Is that always the case?

\medskip

{\bf 4.} Consider $f\in\PH^{0}_A(\Toro^{d})$. Is $f$ transitive?




\bibliographystyle{amsplain}
\bibliography{xbib}
\end{document}